\newcommand{\cmark}{\ding{51}}%
\newcommand{\omark}{\ding{108}}%
\newtheorem{example}{Example}[section]
\newtheorem{theorem}{Theorem}
\newtheorem{lemma}{Lemma}
\newtheorem{corollary}{Corollary}
\newtheorem{definition}{Definition}
\newmdtheoremenv{mdlemma}{Lemma}
\title[Sensitivity analysis for changes of the constraint matrix in (MI)LP]{Sensitivity analysis for linear changes of the constraint matrix of a (mixed-integer) linear program}
\author{\firstname{Guillaume} \lastname{Derval}}
\address{Montefiore Institute, ULi\`ege, Belgium}
\email{gderval@uliege.be}
\author{\firstname{Damien} \lastname{Ernst}}
\address{Montefiore Institute, ULi\`ege, Belgium}
\email{dernst@uliege.be}
\author{\firstname{Quentin} \lastname{Louveaux}}
\address{Montefiore Institute, ULi\`ege, Belgium}
\email{q.louveaux@uliege.be}
\author{\firstname{Bardhyl} \lastname{Miftari}}
\address{Montefiore Institute, ULi\`ege, Belgium}
\email{bmiftari@uliege.be}
\keywords{Linear Programming, Mixed-Integer Programming, Sensitivity, Bounds, Parametric Programming}
\begin{abstract} Understanding how the optimal value of an optimisation problem changes when its input data is modified is an old question in mathematical optimisation. This paper investigates the computation of the optimal values of a family of (possibly mixed-integer) linear optimisation problems in which the constraint matrix is subject to linear perturbations controlled by a scalar parameter that varies within a given interval. This is a largely unresolved question with the additional burden that the resulting value function may be largely irregular. 
We propose several bounding techniques that provide formal guarantees on the behaviour of the objective value across the entire parameter range. 
The proposed bounds rely on tools from robust optimisation, Lagrangian relaxation, and ad-hoc reformulations. Each method is assessed in terms of accuracy, precision, and computational performance. Experimental results on a large benchmark set show that the proposed bounding techniques effectively address this class of problems, delivering strong guarantees and good precision. In addition, we introduce a spatial branch-and-bound algorithm that incorporates these bounds to compute an anytime approximation of the value function within a given error tolerance, and we analyse its computational performance.
\end{abstract}
\begin{document}
\maketitle

\section{Introduction}

One strength of linear programming models is the ability to analyze the variation of an optimal solution when we account for slight changes in the data. A popular example comes from modifying one cost coefficient in the objective. In this case, sensitivity analysis allows you to determine in which range the cost coefficient can vary while keeping the current optimal solution optimal. Similarly, if you assume the variation of one right-hand-side value, we can compute the interval in which the current optimal basis remains optimal. In this case, though, the optimal value varies linearly and the optimal dual variable provides the slope of such a linear variation. This can be very helpful for practitioners as it helps understand how robust their solution is and identify which parameters have the greatest impact on the outcome. As a result, linear programming not only provides an optimal solution but also valuable insights into how changes in the model's inputs affect that solution.

However the analysis becomes more complicated when we try to extend it to changing several coefficients simultaneously, or when we want to analyze the variation implied by the change of a matrix coefficient, especially when this matrix coefficient appears in the basis. 

Let us detail an application in which this may be natural to do though. Consider a linear optimization problem coming from modeling the investment of an electric infrastructure. To model such a problem, we typically want to simulate the production and consumption of electricity of a few devices over a long horizon. We therefore need to consider a time-indexed formulation that includes, over each period, the same constraints that repeat but include time-indexed variables depending on the time we consider them. For such a model, we typically need to make a few assumptions : the efficiency of the devices, the amount of production of the renewable energy, the amount of consumption, etc. If we look in particular at the assumed efficiency of a device, we can see that the parameter that we have assumed may appear several times in the problem, mostly with the same value, repeated for every time-period. So if this parameter is uncertain, changing it would involve changing by a same factor several coefficients of the matrix of the linear program. 

There are countless similar examples. We can consider the uncertain loss of a percentage of some product during transport, the uncertain ramp-up/down rates of some machine or electrical units, the unknown amount of food needed for livestock, the percentage of workers leaving a company each year and so on. 

From theses examples, we want to propose a new model for studying the variation of the optimal value on a linear program. Building on the specific efficiency example, we assume that one parameter varies, but this single parameter may appear in several coefficients of the matrix $A$. More specifically, we consider a nominal linear (or mixed-integer) program

\begin{align}
\begin{split}
    \min\quad &\mathbf{c}^t \mathbf{x}\\
    \text{ s.t } \quad&A\mathbf{x} \le \mathbf{b}\\
     &\mathbf{x} \in \mathbb{R}^{n_c} \times \mathbb{Z}^{n_i}.
\end{split}
\end{align} 
We aim at analyzing how the optimal value varies when one specific parameter $\lambda \in \mathbb{R} $ varies. We assume without loss of generality that the nominal value of $\lambda$ is 0. This parameter has a linear influence on a subset of the coefficients of the matrix $A$. More formally, we are interested in the following problem:
\begin{align}
\begin{split}
    f(\lambda) = \min\quad &\mathbf{c}^t \mathbf{x}\\
    \text{ s.t } \quad&A\mathbf{x} + \lambda D'\mathbf{x} \le \mathbf{b}\\
     &\mathbf{x} \in \mathbb{R}^{n_c} \times \mathbb{Z}^{n_i}.
\end{split}\label{eq:our_generic}
\end{align} 
where $\lambda \in [\lambda_1, \lambda_2]$ models the uncertainty, more specifically the potential variation of the coefficients within some limits. The nominal optimal value is therefore given by $f(0)$ with $0\in[\lambda_1,\lambda_2]$. In this paper, we propose a novel approach consisting of bounding methods on $f(\lambda)$ that provide strong guaranties on the function’s behavior while remaining computationally efficient. First, we discuss the existing solutions in the literature.

\subsection{Related works}
Two closely related fields dealing with these types of problems are Sensitivity Analysis (SA) and Parametric Linear Programming (PLP). They differ based on the assumptions they make upon the modification of $\lambda$. Sensitivity analysis considers the effect of the small variation of a parameter around the optimum, whereas linear parametric programming assesses the effect of a parameter on the objective when it varies within a certain range. They both assess the impact of uncertainty on the objective. The methods discussed in this paper are agnostic of the amount of variation and can be applied in both fields.\\

A complementary field is Robust Optimisation (RO). In contrast to both SA and PLP, RO does not assess the impact of the modification on the objective. Indeed, while SA and PLP focus on the evolution of the optimal objective function given the changes in the parameter, RO techniques focus on finding  a solution that is robust to the modification, i.e. a sub-optimal solution that remains feasible for every change in parameter. The three methods, SA, PLP and RO, are complementary as they all try to deal with uncertainty. Several methods discussed in this paper use robust optimisation techniques to achieve the sensitivity analysis. \\

In its most generic form, the linear problem with a single scalar parameter $\lambda$, where the objective coefficients, the constraints and the right-hand side can be simultaneously modified, can be written as follows:
\begin{align}
\begin{split}
    \min\quad &(\mathbf{c} + \lambda \mathbf{c'})^t \mathbf{x}\\
    \text{ s.t } \quad&(A + \lambda D')\mathbf{x} \le \mathbf{b} + \lambda \mathbf{b'}\\
     &\mathbf{x} \in \mathbb{R}^{n_c} \times \mathbb{Z}^{n_i}
\end{split}\label{eq:generic1}
\end{align}
where $\lambda$ is a parameter that varies in a given range $[\lambda_1, \lambda_2]$; $\mathbf{c'}$, $D'$ and $\mathbf{b'}$ are the parameter's impact on the objective, constraint matrix and right-hand-side term, respectively. The end goal is to evaluate the objective function $f(\lambda)$ over the range $[\lambda_1, \lambda_2]$.

Reoptimising the problem from scratch for a finite subset of values of $\lambda$ in $[\lambda_1, \lambda_2]$ can be computationally very costly, especially for big problems that take several minutes or hours to solve once (for a single $\lambda$) and does not provide any guarantees in between the computed points. Therefore, several approaches have been considered to mitigate this issue. 

Most of the literature focuses on one specific modification at a time; in general, either the objective (via $\mathbf{c'}$), or the right-hand side (RHS, via $\mathbf{b'}$), rarely the left-handside (LHS, via $\mathbf{D'}$) of the continuous version of the linear problem \eqref{eq:generic1}, i.e. where all the variables are continuous $\mathbf{x} \in \mathbb{R}^{n}$. The mixed integer case is hardly considered in the literature even though there is some work \cite{Dawande2000,Anderson2023,Bowman01121972}. 

\paragraph*{Modification of either the objective or the right-hand side.}

For the continuous linear case, let us consider $\mathbf{x^\star}$ and $\boldsymbol{\rho^\star}$ the primal and dual optimal solutions of the continuous unmodified problem, i.e. where $\lambda = 0$:

\noindent\begin{minipage}{.5\linewidth}
\begin{align*}
    \mathbf{x^\star} = \text{argmin}\quad &\mathbf{c}^t \mathbf{x}\\
    \text{ s.t } \quad&A\mathbf{x} \le \mathbf{b}\\
     &\mathbf{x}\geq \mathbf{0}
\end{align*}
\end{minipage}%
\begin{minipage}{.5\linewidth}
\begin{align*}
    \boldsymbol{\rho^\star} = \text{argmax}\quad &\mathbf{b^t} \boldsymbol{\rho}\\
    \text{ s.t } \quad&A^t\boldsymbol{\rho} \ge \mathbf{c}\\
     &\boldsymbol{\rho}\leq \mathbf{0}.
\end{align*}
\end{minipage}

The modifications on the objective and the right-hand side are closely related as by dualising the problem, one can be transformed into the other. Note that when the modification is only performed on the objective, i.e. only $\mathbf{c'} \neq \mathbf{0}$, the previous primal optimal solution $\mathbf{x^\star}$ remains feasible and similarly, the previous optimal dual solution $\boldsymbol{\rho^\star}$ is still a feasible solution. The methods used for these types of modifications can be categorized into three families \cite{jansen}: 
\begin{itemize}
    \item \textit{ones using optimal partitions}. The set of active (tight) constraints on the primal and on the dual form an optimal partition of the linear problem. From this partition, we can derive validity intervals upon $\lambda$. The methods derived from this family use an LP solver as a subroutine to solve the problem and iteratively find these partitions. 
    Adler and Monteiro \cite{Adler} introduce the first algorithm from thus family. Berkelaar et al \cite{Berkelaar1997} introduce another algorithm based on the additional property that either the primal or the dual optimal set remains unchanged when $\mathbf{c'}$ or $\mathbf{b'}$ is non-zero.
    \item \textit{ones using optimal values}. The optimal values of the primal, dual and objective are used to build two auxiliary LP problems that give the range upon which solution stays optimal.
    \item \textit{ones using optimal basis}. The simplex algorithm returns the optimal basis related to a problem. These methods use warm starting and properties related to the basis to perform only a few iterations of the simplex to find another basis when the objective function changes. A two-part paper from Gass and Saaty \cite{obj_param1,obj_param2} uses a modified simplex method to retrieve the optimal solution for multiple changes in the coefficient of the objective function. Gal and Nedoma \cite{gal_multi} present an effective method for finding the regions that keep a basis optimal for multiple changes in the parameters for both types of modification using the simplex tableaux of multiple reoptimisation and graph theory to combine the tableaux.
\end{itemize}
Various works \cite{Geoffrion,bertsimas,jansen} make a summary of some techniques and conceptual foundation for post-optimality analysis for modifications on $\mathbf{c}$ and $\mathbf{b}$.

\paragraph*{Modifications on the constraint matrix.} 
It should be noted that when considering modifications $\mathbf{c'}$ and $\mathbf{b'}$, there always exists an equivalent problem (of the form \eqref{eq:generic1}) that encompasses these modifications inside its matrix $D'$, and such that its objective and right-hand side do not depend on $\lambda$. The converse is not true: the modifications of the constraint matrix are more general. Gal \cite{Gal-1994} presents a summary of the different existing techniques for modifications on the constraint matrix $A$, mostly based on two papers \cite{Sherman,Sherman2}. Sherman and Morison \cite{Sherman} offer a methodology for adjusting an inverse matrix with respect to change of one of its entries. Sherman and Morison \cite{Sherman2} offer a formula for the objective considering the rank-one modification $D' = \mathbf{u^tv}$. Both of these methods lack genericity. Woodbury \cite{woodbury} generalizes the  Sherman and Morison formula for any decomposition $D'= UCV$ which can hardly be applied in our case due to having a $\lambda$-dependent matrix inversion which is what we are trying to avoid.  \\

More recently, Zuidwijk \cite{Zuidwijk} derives explicit local formulas to compute the evolution of the objective function and intervals on which these formulas are valid. They rewrite the basic inverse matrix as a function of $\lambda$ and recompute it for several values in the range. Their algorithm involves the finding of the optimal basis, computation of the range of $\lambda$ for which the basis stays optimal, the evaluation of the derived formulas using the said basis and reoptimisation when switching to another basis. Their formulas for computing the objective for a given basis require the computation of the eigenvalues of the matrices $A$ and $D'$ and leads to a polynomial problem with orders equal to the number of constraints. For big LPs, finding the solution of the polynomial problem can only be done using approximation methods. Khalipour et al. \cite{lhs_param_algo} also discuss an algorithm similar to Zuidwijk \cite{Zuidwijk}. However, they claim that their algorithm can be applied when working on larger problems. They use the Flavell and Salkin \cite{flavell1975approach} formula to compute an approximation of the basic inverse matrix.\\
All of the above-mentioned methods for the modification of the constraint matrix rely either on heavy computations, reoptimisations, approximations or are not sufficiently generic. Most of the methods cannot be applied on mixed-integer problems. Additionally, we underline that even discretizing $[\lambda_1, \lambda_2]$, with a very fine granularity, at the price of heavy computations, may still lead to a false assessment of the behavior of $f(\lambda)$ over the interval. Indeed, as the matrix $D'$ can contain an arbitrary number of constraints, the resulting problem for a given $\lambda$ can behave in an unpredictable and sometimes erratic fashion in between two closely located points. 

\subsection{Our contributions}
In this paper, we propose methods to compute upper and lower bounds of problems given in \eqref{eq:our_generic}, for varying $\lambda \in [\lambda_1, \lambda_2]$. The bounding methods provide guarantees between the computed points. They can capture any erratic behaviour that might be missed by sampling approaches. They also give an indication of the evolution of the objective and can help practitioners focus on particular values of $\lambda$ within the interval that have an interesting or unconventional behaviour. The bounding methods proposed are based on three approaches: robust optimisation, Lagrangian relaxations and specific reformulations. Out of these approaches, we derive several bounding methods: ones that computes constant bounds, ones giving variables bounds depending on $\lambda$ and ones computing envelopes on the objective function. The envelopes are a combination of multiple variable bounds, proven optimal in the sense that no other variable bound of the same type is tighter,  that form an envelope around the objective function. Some methods add new degrees of freedom, and therefore, can lead to multiple variations. \\

All the bounding methods are able to provide upper and lower bounds for the continuous case. Three of them of the shelf generate lower bounds for MILPs. Via linearization, two additional methods are able to generate lower bounds for MILPs. Two methods are able to generate upper bounds for MILPs. \\

To test our methods, we build a new dataset of continuous and mixed-integer problems problems. The dataset is divided in random generated continuous equality and inequality problems, random generated mixed-integer equality and inequality problems, real-life mixed-integer facility location problems and mixed-integer unit commitment problems. For each of these categories, we study the bounding methods in terms of availability (whether a bound is available or not), error (with respect to the best possible solution) and timing. We show that some bounding methods are always available for the continuous inequality problems (to generate lower and upper bounds). For the continuous equality problems, some bounding methods are always available to generate lower bounds. However, generating upper bounds for that class of problems is more complicated with an availability of up-to $41\%$. For the MILP datasets, the availability of all bounding methods is high. Some reaching $100\%$ availability. \\

In terms of error, the best performing bounds on the inequality continuous dataset have an error of $9\%$ for lower bounds and $0.15\%$ on upper bounds. For the continuous equality dataset, the best performing have an error of $0.69\%$ on lower bounds and $4\%$ on upper bounds. On the MILP problems, as expected the error is bigger. When bounding the inequality MILPs, the smallest error is $39\%$ for the lower bounds and $0\%$ for the upper bounds. For the equality MILPs, the smallest error is $7.77\%$ to generate lower bounds and $0\%$ for upper bounds. For the facility location problems, the smallest lower bound error is $4.96\%$ and the smallest lower bound error for upper bounds is $5.07\%$. For the unit commitment dataset, the smallest lower and upper bound error is $47.5\%$ and $12\%$ respectively. In addition to the bounding methods, we also introduce a spatial branch-and-bound algorithm to compute these bounds with relatively small gaps and discuss its efficiency.

\paragraph*{Paper outline.}
In Section \ref{sec:problem}, we formalise and analyse the problem at hand. In Section \ref{sec:bounds}, we introduce all the bounding methods in the form of theorems and provide the associated proofs. A summary of the bounding methods is given in Table \ref{tab:methods}. Some methods work only on specific subsets of problems, like continous problems or non-negative variables. In Section \ref{sec:expe}, we introduce a new dataset of problems to serve as a benchmark and perform two experiments. First, we benchmark the different bounds on the data problems in terms of availability, precision and timing for providing upper and lower bounds. In the second experiment, we introduce a spatial branch-and-bound algorithm that uses lower and upper bounds to compute the objective function and refines the uncertainty interval to minimize the gap between the bounds.

\begin{table}[H]
    \centering
    \begin{tabular}{llcccccc}
         & & & \multicolumn{2}{c}{LP} & \multicolumn{2}{c}{MILP} \\
         \cmidrule(r){4-5}\cmidrule(l){6-7}
         Approach & Type & Section & LB & UB & LB & UB & Cond. \\
         \toprule
         Robust & Constant & \ref{sec:constant} & \omark & \cmark &  & \cmark & \\
         & Linear in $\lambda$ &\ref{sec:variable} & \omark & \cmark &  &  & \\
         & Envelope & \ref{sec:robust_concav_env} & \omark & \cmark &  & \\
    Coeff. & Constant & \ref{sec:coeff_wise} & \cmark & \cmark & \cmark & \cmark & $\mathbf{x} \geq \mathbf{0}$\\
         Lagrangian 
                    & Bi-segment & \ref{sec:lag_flat} & \cmark & \omark & \cmark & & \\
                    & Bi-segment + coeff. & \ref{sec:lag_flat} & \cmark & \omark & \cmark & & $\mathbf{x} \geq \mathbf{0}$\\
         \bottomrule
    \end{tabular}
    \caption{Summary of all the bounding methods presented in this paper. \cmark: the bounding method is available for the category. \omark: available via dualisation.}
    \label{tab:methods}
\end{table}

\subsection*{Notations}

We denote a scalar with a standard-font symbol $a$, a vector with a bold lower case symbol $\mathbf{a}$, a matrix with an uppercase letter $A$, a vector space with a calligraphic uppercase letter $\mathcal{A}$. The $i^{th}$ element of a vector $\mathbf{a}$ is noted $a^{i}$. The $i^{th}$ row and $j^{th}$ column of a matrix $A$ are respectively given by $\mathbf{a}^{i, .}$ and $\mathbf{a}^{., j}$.  The element $i, j$ of a matrix $A$ is given by $a^{i,j}$.





\section{Problem formalisation}\label{sec:problem}
In this section, we provide a full formalisation of the considered problem.
Let us denote by $\mathcal{P}(\lambda)$ the following optimisation problem: 
\begin{align*}
    \mathcal{P}(\lambda)\equiv \min\quad &\mathbf{c}^t \mathbf{x}\\
    \text{ s.t } \quad&A\mathbf{x} + \lambda D'\mathbf{x} \le \mathbf{b}\\
     &\mathbf{x}\in \mathbb{R}^{n_c}\times \mathbb{Z}^{n_i}.
\end{align*} 
Thus, with this notation, the function $f(\lambda)$ defined in \eqref{eq:generic1} is the optimal objective function of $\mathcal{P}(\lambda)$ for varying $\lambda$.

We can modify this problem without losing any generality: we decompose the constraint matrix $A\in \mathbb{R}^{m\times n}$ in two matrices $A_1$ and $A_2$ of respective size $m_1\times n$ and $m_2\times n$, with $m_1+m_2=m$. The matrix $A_1$ encompasses all the constraints upon which there are no modifications depending on the external parameter $\lambda$, while $A_2$ contains the constraints affected by the modification $\lambda D'$. Similarly, we divide $\mathbf{b}$ in $\mathbf{b_1}$ and $\mathbf{b_2}$ of respective size $m_1\times 1$ and $m_2\times 1$. As we can have $m_1=0$, we indeed do not lose any generality.
This results in the following equivalent problem: 
\begin{align}
\begin{split}
    \mathcal{P}(\lambda)\equiv\min\quad &\mathbf{c}^t \mathbf{x}\\
    \text{ s.t } \quad&A_1\mathbf{x} \le \mathbf{b_1}\\
    & A_2\mathbf{x}+\lambda D\mathbf{x}\le \mathbf{b_2}\\
    & \mathbf{x} \in \mathbb{R}^{n_c}\times \mathbb{Z}^{n_i}.
\end{split}\label{eq:separated}
\end{align}

For a fixed value $\bar \lambda$, an optimal solution of $\mathcal{P}(\bar \lambda)$ is denoted $\mathbf{x}_{\mathcal{P}(\bar \lambda)}^*$ and in the case of a purely continuous problem (i.e. $n_i=0$) an optimal dual solution $\boldsymbol{\rho}_{\mathcal{P}(\bar \lambda)}^*$. For the sake of readability, when explicit, we may omit the subscript of $\mathbf{x}_{\mathcal{P}(\bar \lambda)}^*$ and $\boldsymbol{\rho}_{\mathcal{P}(\bar \lambda)}^*$ and write $\mathbf{x}^*$ and $\boldsymbol{\rho}^*$.

The goal of this paper is to find $ub_{\lambda_1, \lambda_2}(\lambda)$ and $lb_{\lambda_1, \lambda_2}(\lambda)$, respectively an upper bound and lower bound of the optimal value on the problem written in \eqref{eq:separated}, i.e. $lb_{\lambda_1, \lambda_2}(\lambda) \le
f(\lambda)\le ub_{\lambda_1, \lambda_2}(\lambda), \forall \lambda \in [\lambda_1, \lambda_2].
\label{eq:lbub}$
In general, $f(\lambda)$ has no desirable properties. Depending on $A_1, A_2, D, \mathbf{b_1}$ and $\mathbf{b_2}$, it is generally non-convex, non-concave and non-differentiable. As the problem may be unbounded or infeasible for some $\lambda$, it can even be non-continuous. In the following, we illustrate these behaviours with two examples.
\begin{example}
    Let us consider an LP problem where $A_1 = 0$, $\mathbf{b_1} = 0$, $D = -A_2$, $\mathbf{c} \neq 0$ and $\lambda \in [0, 2]$, the problem $\mathcal{P}(\lambda)$ becomes 
\begin{align*}
    \min\quad &\mathbf{c}^t \mathbf{x}\\
    \text{ s.t }& A_2\mathbf{x}-\lambda A_2\mathbf{x}\le \mathbf{b_2}.
\end{align*}
For $\lambda = 1$, the problem is infeasible if $\mathbf{b_2} < \mathbf{0}$ or unbounded if $\mathbf{b_2} \geq \mathbf{0}$.\\
\end{example}

\begin{example}
In this example, we showcase the erratic behaviour of $f(\lambda)$. Let us consider the following problem, 

\begin{align}
\begin{split}
\mathcal{P}_{toy}(\lambda) \equiv 
    \min\quad &\begin{pmatrix}2  & -2\end{pmatrix} \begin{pmatrix}
        x \\ y
    \end{pmatrix}\\
    \text{ s.t } \quad& \begin{pmatrix}
        -2 & 2 \\
        -1 & 0
    \end{pmatrix}
    \begin{pmatrix}
        x \\ y
    \end{pmatrix}
    \leq \begin{pmatrix}
        4 \\ 1
    \end{pmatrix}\\
    & \begin{pmatrix}
        2 & 1 \\ -2 & -3 \\ 2 & 2 \\ -1 & -4
    \end{pmatrix}\begin{pmatrix}
        x \\ y
    \end{pmatrix} + \lambda \begin{pmatrix}
        -1 & -4 \\ 0 & 4 \\ -4 & -3 \\ 2 & 4
    \end{pmatrix}\begin{pmatrix}
        x \\ y
    \end{pmatrix}
    \leq 
    \begin{pmatrix}
        4 \\ 2 \\ 0 \\ 2
    \end{pmatrix}\\
    \forall \lambda \in [-10, 9].
\end{split}\label{example:toy}
\end{align}

We denote the associated optimal objective function by $f_\text{toy}(\lambda)$. Note that for this particular example we do not impose $x, y \ge 0$.


To highlight the difficulty of predicting the behaviour of the objective function of $f_\text{toy}(\lambda)$ through sampling and choosing the set size for sampling, we compute $f_\text{toy}(\lambda)$ for the same range of $\lambda$ values whilst using two different levels of granularity. On one hand, we use a coarser discretisation step of $0.01$ for all the optimal objectives for $\lambda \in [-10, 9]$ shown as by the blue curve in Figure \ref{fig:toy_problem}. As we can see, the value of $f(\lambda)$ varies strongly in a neighbourhood located after $\lambda = 0.5$ and is neither concave nor convex. On the other, we use a discretisation step of $1$ shown as by the orange curve on the same figure. As we can see, with this discretisation step, we would have missed the erratic behaviour of the function after 0, illustrated by the blue peak. While the difference between the two steps is rather large in the context of this toy example, this problem can also occur in much larger problems at much finer scales. 
\begin{figure}[H]
    \centering
    \includegraphics[width=0.65\textwidth]{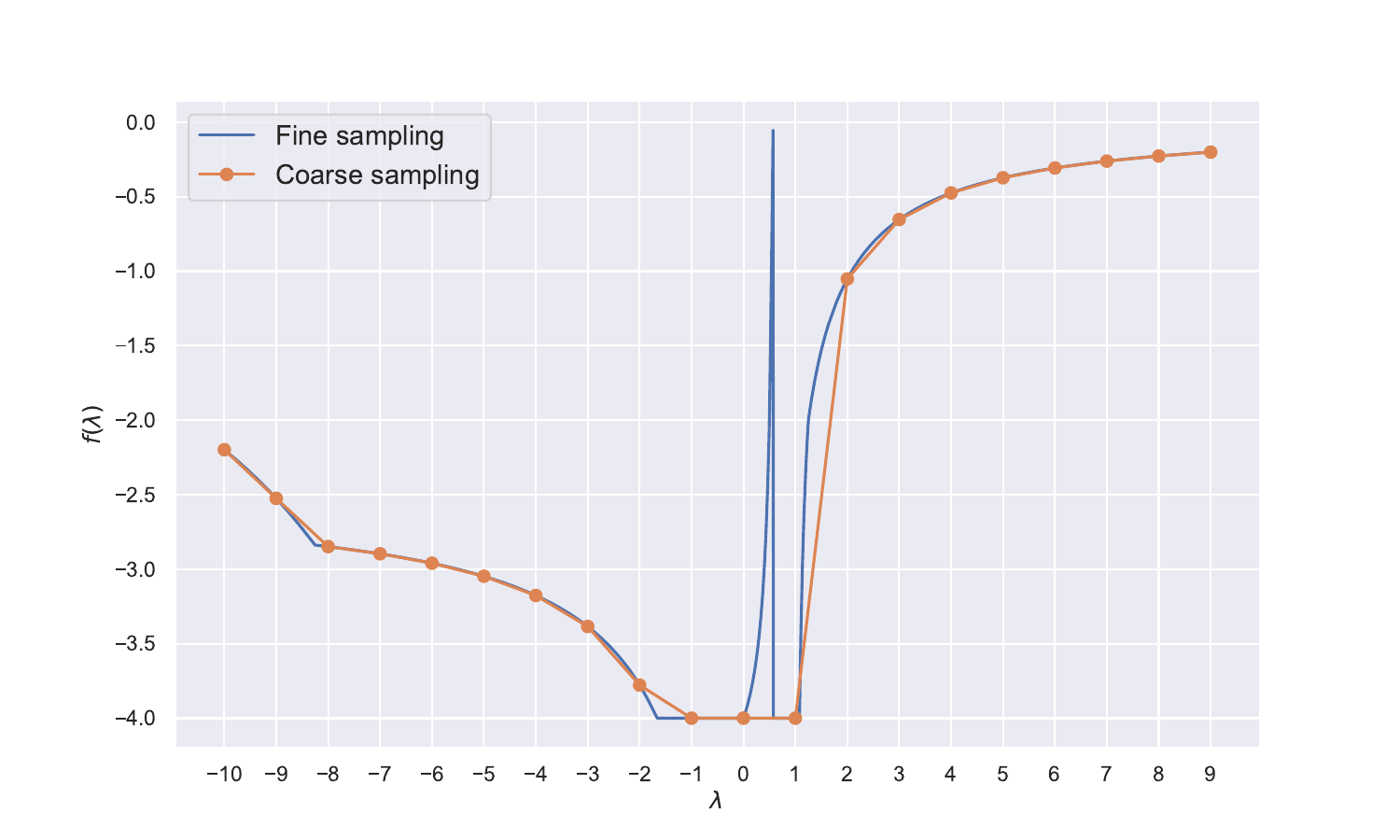}
    \caption{Plot of $f_{\text{toy}}(\lambda)$ between $[-10, 9]$. The orange line with dots is a coarse sampling of the function, made for each $\lambda \in \{-10, 9\}$, and linearly interpolated between these points.}
    \label{fig:toy_problem}
\end{figure}

In Figure \ref{fig:feasible_space}, we show the evolution of the feasible space for three values of $\lambda$ in the interval $[0,1]$ where $f_\text{toy}(\lambda)$ shows an erratic behaviour. We see that the feasible space changes constantly due to some constraints switching from tight to non-tight and vice-versa, with small changes in $\lambda$.

\begin{figure}[h!]
\centering
\includegraphics[width=\textwidth]{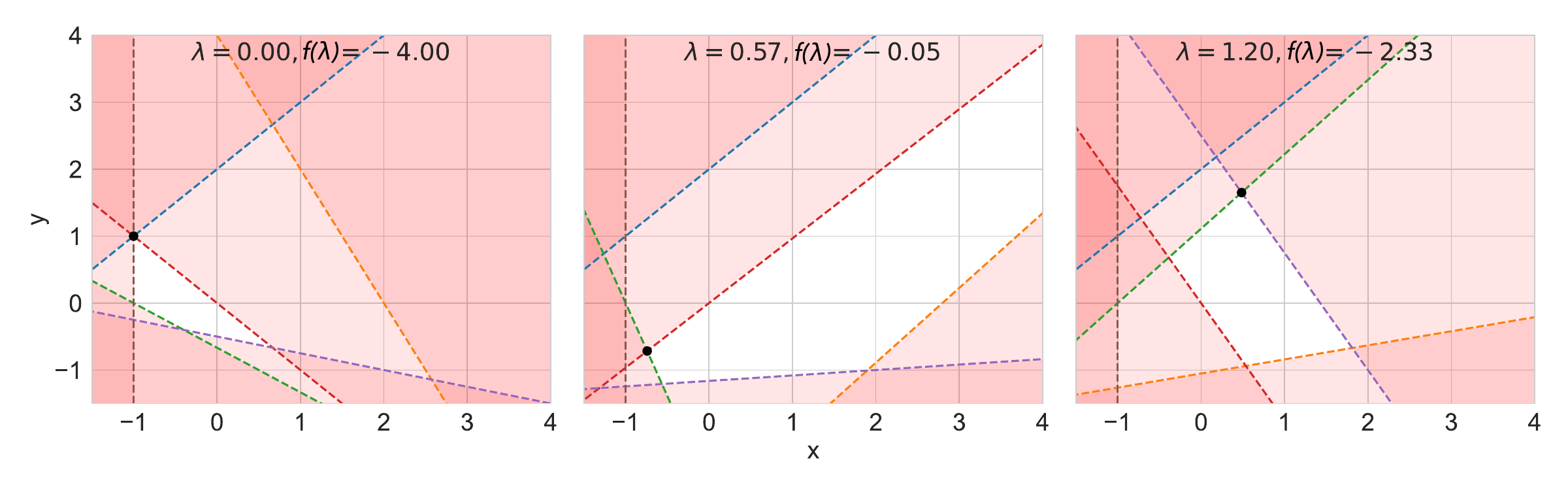}
\caption{The feasible space of the problem given in Equation \eqref{example:toy} for different values of $\lambda$. Each inequality partitions the space into two parts: a feasible space, coloured in white, and a non-feasible space, coloured in red. The optimum is shown as a black dot.}
\label{fig:feasible_space}
\end{figure}

\end{example}
\section{Bounding methods formulations}\label{sec:bounds}
    In this section, we show how to compute bounds on the optimal objective function $f(\lambda)$. We present our three approaches:
\begin{itemize}
    \item Robust optimisation: we reformulate the inner problem using several robust optimisation bounding methods separated in three types: 
    \begin{itemize}
        \item one consisting of a constant solution for a given range,
        \item a second solution linearly dependent on $\lambda$,
        \item a third one takes a combination of affinely dependent solution, to form an envelope around the objective function.
    \end{itemize}
    These techniques provide upper bounds. On LP problems, they can also be applied on the dual to provide lower bounds. The constant bound can be applied to MILP to provide upper bounds.
    \item Coefficient-wise relaxation and tightening: we reformulate the $\lambda D$ part of the modification constraints into a single matrix that do not depend on $\lambda$. This is done by considering each coefficient of the $D$ matrix independently, hence the \textit{coefficient-wise} name. These bounding methods provide upper and lower bounds, even if used only on the primal. They however require the variables to be nonnegative. These methods can be applied to MILP.
    \item Lagrangian relaxation: we dualise the constraints linked to the modification, $A_2\mathbf{x}+\lambda D\mathbf{x}\le \mathbf{b_2}$. Depending on the choice of the Lagrangian multiplier, we get three types of this bounding method: constant, polynomially-dependent on $\lambda$, and an envelope formed from a set of $\lambda$-dependant solutions. These techniques provide lower bounds. On LP problems, they can also be applied on the dual problem to provide upper bounds. These methods can be applied to MILP by sometimes linearization to get lower bounds
\end{itemize}
These techniques are discussed in the next section on the primal. Their application to the dual problem leads to the opposite type of bound. In other words, if the technique applied on the primal gives a lower bound $lb(\lambda)$, resp. upper bound $ub(\lambda)$, its application on the dual yields an upper bound $ub(\lambda)$, resp. lower bound $lb(\lambda)$ on $f(\lambda)$. 


\subsection{Constant robust solution}\label{sec:constant}
The first approach taken consists of sacrificing the guaranteed optimality of the solution for a feasible solution on the whole interval $[\lambda_1, \lambda_2]$. Therefore, we turn to robust optimisation in order to search for a feasible solution that provides the tightest bound on this interval.

An upper bound can be obtained by finding a solution that satisfies $(A_2 + \lambda D) \mathbf{x} \leq \mathbf{b}_2\ \forall \lambda \in [\lambda_1, \lambda_2]$. The corresponding robust optimisation problem can be written as follows and gives an upper bound on $f(\lambda)$ in the range  $[\lambda_1, \lambda_2]$:

\begin{mini}
{x}
{\mathbf{c}^t\mathbf{x}}
{\label{constant_robust_1}}
{\mathcal{P}^{\text{CR}}_{\lambda_1, \lambda_2} \equiv }
\addConstraint{A_1 \mathbf{x}}{\leq \mathbf{b_1}}{}
\addConstraint{(A_2 + \lambda D) \mathbf{x}}{\leq \mathbf{b_2}}{\ \forall \lambda \in [\lambda_1, \lambda_2]}
\addConstraint{\mathbf{x}}{\in \mathbb{R}^{n_c} \times \mathbb{Z}^{n_i}.}
\end{mini}

This new linear reformulation of the initial problem is unusual in the sense that it possesses an infinite number of constraints, one for each $\lambda \in [\lambda_1, \lambda_2]$. It is possible to reformulate the problem in a short finite optimisation problem.

\begin{theorem}
    The following linear problem is equivalent to problem \eqref{constant_robust_1} and provides an upper bound for $f(\lambda)$ $\forall \lambda \in [\lambda_1, \lambda_2]$:
    \begin{mini}
{\mathbf{x}}
{\mathbf{c}^t\mathbf{x}}
{\label{eq:theorem2}}
{\mathcal{P}^{\text{CR}}_{\lambda_1, \lambda_2} \equiv }
\addConstraint{A_1 \mathbf{x}}{\leq \mathbf{b_1}}{}
\addConstraint{A_2 \mathbf{x} + \lambda_1D\mathbf{x}}{\leq \mathbf{b_2}}{}
\addConstraint{A_2 \mathbf{x} + \lambda_2D\mathbf{x}}{\leq \mathbf{b_2}}{}
\addConstraint{\mathbf{x}}{\in \mathbb{R}^{n_c} \times \mathbb{Z}^{n_i}.}{}
\end{mini}
\label{th:constant}
\end{theorem}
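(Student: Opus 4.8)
The plan is to show the equivalence of the two feasible sets, since the objective $\mathbf{c}^t\mathbf{x}$ and the constraints $A_1\mathbf{x} \le \mathbf{b_1}$ are identical in both problems. Thus it suffices to prove that, for a fixed $\mathbf{x}$, the infinite family of constraints $(A_2 + \lambda D)\mathbf{x} \le \mathbf{b_2}$ for all $\lambda \in [\underline{\lambda}, \overline{\lambda}]$ holds if and only if it holds at the two endpoints $\lambda = \underline{\lambda}$ and $\lambda = \overline{\lambda}$. One direction is immediate: if the family holds for all $\lambda$ in the interval, it holds in particular at the endpoints.

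For the converse, I would argue row by row. Fix a row index $i$ and let $g_i(\lambda) := \mathbf{a}_2^{i,.}\mathbf{x} + \lambda\, \mathbf{d}^{i,.}\mathbf{x} - b_2^i$. This is an affine (hence linear) function of the single scalar $\lambda$. A linear function on an interval attains its maximum at one of the endpoints, so $\max_{\lambda \in [\underline{\lambda},\overline{\lambda}]} g_i(\lambda) = \max\{g_i(\underline{\lambda}), g_i(\overline{\lambda})\}$. Equivalently, by convexity, for any $\lambda = t\underline{\lambda} + (1-t)\overline{\lambda}$ with $t \in [0,1]$ we have $g_i(\lambda) = t\, g_i(\underline{\lambda}) + (1-t)\, g_i(\overline{\lambda}) \le \max\{g_i(\underline{\lambda}), g_i(\overline{\lambda})\}$. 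Hence if $g_i(\underline{\lambda}) \le 0$ and $g_i(\overline{\lambda}) \le 0$, then $g_i(\lambda) \le 0$ for every $\lambda$ in the interval. Applying this to every row $i = 1, \dots, m_2$ shows that the two endpoint constraint blocks $A_2\mathbf{x} + \underline{\lambda}D\mathbf{x} \le \mathbf{b_2}$ and $A_2\mathbf{x} + \overline{\lambda}D\mathbf{x} \le \mathbf{b_2}$ together imply $(A_2+\lambda D)\mathbf{x} \le \mathbf{b_2}$ for all $\lambda \in [\underline{\lambda}, \overline{\lambda}]$.

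This establishes that the feasible sets of \eqref{constant_robust_1} and \eqref{eq:theorem2} coincide, so the two problems have the same optimal value; and since \eqref{constant_robust_1} was already argued (in the text preceding the theorem) to provide an upper bound on $f(\lambda)$ for all $\lambda$ in the range — any $\mathbf{x}$ feasible for the robust problem is feasible for $\mathcal{P}(\lambda)$, so its cost dominates $f(\lambda)$ — the same holds for \eqref{eq:theorem2}. I do not anticipate a genuine obstacle here; the only thing to be careful about is the logical structure of the reduction (reducing the infinite constraint system to a per-row, single-variable statement and invoking linearity/convexity of $g_i$ on the interval), and noting explicitly that the argument uses nothing about $A_1$, $\mathbf{b_1}$, or the sign of $\mathbf{d}^{i,.}\mathbf{x}$ — it is simply the fact that a linear function on a segment is maximised at an endpoint.
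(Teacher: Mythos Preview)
Your proof is correct and follows essentially the same approach as the paper: both reduce the infinite family of constraints to a row-by-row argument that an affine function of the scalar $\lambda$ on $[\underline{\lambda},\overline{\lambda}]$ attains its maximum at an endpoint, so the two endpoint constraints are equivalent to the full family. The paper phrases this via the robust counterpart $\max_\lambda \lambda\, \mathbf{d}^{i,.}\mathbf{x}$ and a sign-based case split, whereas you use the convex-combination identity $g_i(\lambda)=t\,g_i(\underline{\lambda})+(1-t)\,g_i(\overline{\lambda})$ directly, but the content is the same.
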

\begin{proof}
    Any feasible solution $\mathbf{x}$ to $\mathcal{P}^{\text{CR}}_{\lambda_1, \lambda_2}$ must respect the following condition:
    \begin{align}
        (A_2 + \lambda D) \mathbf{x} &\leq \mathbf{b_2}\ \forall \lambda \in [\lambda_1, \lambda_2]\ .
    \end{align}
    This can be reformulated using as its robust counterpart
    \begin{align}
        \max_{\lambda \in [\lambda_1, \lambda_2]} (A_2 + \lambda D) \mathbf{x} \leq \mathbf{b_2}
    \end{align}
    which is equivalent to 
    \begin{align}
        A_2 \mathbf{x} + \max_{\lambda \in [\lambda_1, \lambda_2]} \lambda D\mathbf{x} \leq \mathbf{b_2}, 
    \end{align}
    where the $\max$ operator is here applied component-wise.
    
    If we have $\mathbf{d}^{i, .}\mathbf{x} \geq 0$ (resp. $\leq 0$), then $\lambda = \lambda_2$ (resp. $\lambda = \lambda_1$) is an optimal solution to $\max_{\lambda \in [\lambda_1, \lambda_2]} \lambda \mathbf{d}^{i, .}\mathbf{x}$.  Satisfying these two cases is thus equivalent to the following set of constraints:
    \begin{align}\label{eq:robust_const_proof}
        \begin{cases} 
            A_2 \mathbf{x} + \lambda_1D\mathbf{x} \leq \mathbf{b_2}\\
            A_2 \mathbf{x} + \lambda_2D\mathbf{x} \leq \mathbf{b_2}
        \end{cases}
    \end{align}

    For each component $i$ of the vector $\mathbf{b_2}$, one of these constraints is redundant. 
    Hence, Problem \eqref{constant_robust_1} is equivalent to \eqref{eq:theorem2}.
\end{proof}

This reformulation has $n$ variables and $m_1+2m_2$ constraints and works for MILPs. It should be noted that for two given values of $\lambda_1$ and $\lambda_2$, the feasible space respecting both $A_2\mathbf{x} + \lambda_1 D\mathbf{x} \leq \mathbf{b_2}$ and $A_2\mathbf{x} + \lambda_2 D\mathbf{x} \leq \mathbf{b_2}$ can be empty if at least two constraints are conflicting.

\begin{example}
\label{example:robust-1}
The following problem is a very simple case where constraints can be conflicting:
\begin{align}
\begin{split}
    \min_{x, y} &\begin{pmatrix}
        -2 & -2
    \end{pmatrix}
    \begin{pmatrix}
        x \\ y
    \end{pmatrix}\\
    \text{s.t. } & \begin{pmatrix}
        3 & 1\\ 0 & -1
    \end{pmatrix}\begin{pmatrix}
        x \\ y
    \end{pmatrix} \leq \begin{pmatrix}
        3 \\ 3
    \end{pmatrix}\\
    &\begin{pmatrix}
        -5 & -2 \\
        1 & 4
    \end{pmatrix}
    \begin{pmatrix}
        x \\ y
    \end{pmatrix}+\lambda \begin{pmatrix}
        -3 & -2 \\
        -3 & 0
    \end{pmatrix} \begin{pmatrix}
        x \\ y
    \end{pmatrix} \leq
    \begin{pmatrix}
        0 \\ -3
    \end{pmatrix}\\
    & \forall \lambda \in [-2, 2]\ .
\end{split}\label{eq:example-robust-1}
\end{align}

Its objective depending on $\lambda$ is shown in Figure \ref{fig:example-robust-2}.

By applying the Theorem \ref{th:constant}, an upper bound is given by, 

\begin{align}
    \begin{split}
        \min_{x, y} & \begin{pmatrix}
            -2 & -2
        \end{pmatrix}\begin{pmatrix}
            x \\ y
        \end{pmatrix}\\
        \text{s.t. } & \begin{pmatrix}
            3 & 1\\0 & -1\\ 1 & 2 \\ 7 & 4 \\ -11 & -6 \\ -5 & 4
        \end{pmatrix}
        \begin{pmatrix}
            x \\ y
        \end{pmatrix}\leq
        \begin{pmatrix}
            3\\3 \\0 \\0 \\ -3 \\ -3
        \end{pmatrix}
    \end{split}\label{ex:extended_constant}
\end{align}
We show by applying Farkas' lemma that the problem \eqref{ex:extended_constant} is infeasible. Farkas' lemma states that for a given problem $\min c^t\mathbf{x} \text{ s.t. } A\mathbf{x} \le \mathbf{b}$ if we find a vector $\mathbf{u} \geq \mathbf{0}$ such that $\mathbf{u}^t A = \mathbf{0}$ and $\mathbf{u}^t \mathbf{b} < \mathbf{0}$ then this problem is infeasible. If we consider $\mathbf{u} = [2, 0, 0, 7, 5, 0]$, we indeed have $\mathbf{u}^t A = \mathbf{0}$ and $\mathbf{u}^t \mathbf{b} = -9$.
\end{example}

\subsection{Variable robust solution, affine on $\lambda$}\label{sec:variable}
A second possibility is to find solutions that vary linearly in $\lambda$. We perform a reformulation of all the variables $\mathbf{x}$ by replacing them with $\mathbf{y}+\lambda \mathbf{z}$, a linear function of $\lambda$. This transformation allows us to find a solution that adapts to a change in the value of $\lambda$.

We aim to find an upper bound $\text{ub}^{\text{rl}}_{\lambda_1, \lambda_2}(\lambda)$ on the function $f(\lambda)$ of the form $\mathbf{c}^t(\mathbf{y}+\lambda \mathbf{z})$. The $rl$ superscript stands for \textbf{r}obust \textbf{l}inear.
To ensure that $\text{ub}^{\text{rl}}_{\lambda_1, \lambda_2}(\lambda)$ is indeed an upper bound, the newly introduced variables $\mathbf{y}$ and $\mathbf{z}$ must satisfy the following conditions: 
\begin{align}
    A_1 (\mathbf{y} + \lambda \mathbf{z}) & \leq \mathbf{b_1} & \forall \lambda \in [\lambda_1, \lambda_2]\label{ctr_var_1}\\
    (A_2 + \lambda D) (\mathbf{y} + \lambda \mathbf{z}) & \leq \mathbf{b_2} & \forall \lambda \in [\lambda_1, \lambda_2]\label{ctr_var_2}\\
    \mathbf{y} + \lambda \mathbf{z} &\in \mathbb{R}^{n_c} \times \mathbb{Z}^{n_i} & \forall \lambda \in [\lambda_1, \lambda_2]\label{integr}
\end{align}

The conditions \eqref{integr} are complex to satisfy if the problem is mixed-integer. In the remaining of this  section and the following one, we thus focus on linear problems, i.e. with $n_i = 0$, and the last set of constraints simplifies to $\mathbf{y}, \mathbf{z} \in \mathbb{R}^{n_c}$. 

As in the previous case, we can rewrite the remaining infinite number of constraints to a finite albeit more constrained form and obtain the following Theorem.

\begin{theorem}
    Any $\mathbf{y}$ and $\mathbf{z}$ satisfying the constraints
\begin{align}
    \begin{cases}
        A_1\mathbf{y} + \lambda_1 A_1\mathbf{z} \leq \mathbf{b_1}\\
        A_1\mathbf{y} + \lambda_2 A_1\mathbf{z} \leq \mathbf{b_1}\\
        (A_2 + \lambda_1 D) (\mathbf{y} + \lambda_1 \mathbf{z}) \leq \mathbf{b_2}\\
        (A_2 + \lambda_2 D) (\mathbf{y} + \lambda_2 \mathbf{z}) \leq \mathbf{b_2}\\
        A_2\mathbf{y} + D\mathbf{z}\lambda_1\lambda_2+(D\mathbf{y}+A_2\mathbf{z})\frac{\lambda_1+\lambda_2}{2} \leq \mathbf{b_2}
    \end{cases}
    \label{eq:affinespace}
\end{align}
also satisfy both equations \eqref{ctr_var_1} and \eqref{ctr_var_2}.

If $\mathcal{P}(\lambda)$ is a linear problem ($n_i=0$), any $\mathbf{y}$ and $\mathbf{z}$ satisfying these conditions provide an upper bound in the form $\text{ub}^{\text{rl}}_{\lambda_1, \lambda_2}(\lambda)=\mathbf{c}^t(\mathbf{y}+\lambda \mathbf{z}) \geq f(\lambda)$ $\forall \lambda \in [\lambda_1, \lambda_2]$.

    \label{theorem:robust-line}
\end{theorem}

\begin{proof}
    We write the robust counterpart in order to obtain a finite number of constraints. Let us first focus on constraints \eqref{ctr_var_1}:
    \begin{align}
        A_1 (\mathbf{y} + \lambda \mathbf{z}) \leq \mathbf{b_1}\ \forall \lambda \in [\lambda_1, \lambda_2]
    \end{align}
    which can be rewritten as
    \begin{align}
        \max_{\lambda \in [\lambda_1, \lambda_2]}A_1 (\mathbf{y} + \lambda \mathbf{z}) \leq \mathbf{b_1}
    \end{align}
    and
    \begin{align}
        A_1\mathbf{y} + \max_{\lambda \in [\lambda_1, \lambda_2]}\lambda A_1\mathbf{z} \leq \mathbf{b_1}
    \end{align}
    As the max operator is applied component-wise, for every constraint in $\max_{\lambda \in [\lambda_1, \lambda_2]}\lambda A_1\mathbf{z}$, the maximum is either achieved for $\lambda=\lambda_1$ or $\lambda=\lambda_2$. 
    \begin{align}
        A_1\mathbf{y} + \max_{\lambda \in [\lambda_1, \lambda_2]}\lambda A_1\mathbf{z} \leq \mathbf{b_1}
         \equiv \begin{cases}
            A_1\mathbf{y} + \lambda_1 A_1\mathbf{z} \leq \mathbf{b_1}\\
            A_1\mathbf{y} + \lambda_2 A_1\mathbf{z} \leq \mathbf{b_1}
        \end{cases}.\label{rl_p1}
    \end{align}
    Hence, any $\mathbf{x}$ satisfying \eqref{rl_p1} also satisfies \eqref{ctr_var_1} and conversely.
    The case of \eqref{ctr_var_2} is more complex as its left-hand side is a quadratic function of $\lambda$. 
    \begin{align}
        & (A_2 + \lambda D) (\mathbf{y} + \lambda \mathbf{z}) = A_2\mathbf{y} + \lambda (D\mathbf{y} + A_2\mathbf{z}) + \lambda^2 D\mathbf{z} \leq \mathbf{b_2} \ \forall \lambda \in [\lambda_1, \lambda_2]
    \end{align}
    Again, we rewrite it using the robust counterpart as a maximization
    \begin{align}
         & \max_{\lambda\in [\lambda_1, \lambda_2]} A_2\mathbf{y} + \lambda (D\mathbf{y} + A_2\mathbf{z}) + \lambda^2 D\mathbf{z} \leq \mathbf{b_2}\\
        := & \max_{\lambda\in [\lambda_1, \lambda_2]} g(\lambda) \leq \mathbf{b_2}.
    \end{align}
    The function $g(\lambda)$ is a quadratic function of $\lambda$. The maximum of the quadratic equation can be found analytically but is itself non-linear in $z$. Instead of directly using $g(\lambda)$, we propose using a piecewise-linear upper bound of this function in the above condition; this allows us to relax \eqref{ctr_var_2} into a linear program in $\mathbf{y}$ and $\mathbf{z}$. 
    
    For this, we can use the following lemma:
    \begin{mdframed}[%
        topline=false,%
        rightline=false,%
        bottomline=false,
        leftmargin=2em
    ]
    \begin{lemma}
        Given a quadratic function $q(x)=ax^2+bx+c$. The maximum of $q(x)$ over $x_1 \leq x \leq x_2$ is upper bounded by 
        \begin{align*}
        \max \begin{cases}
            ax_1^2+bx_1+c\\
            ax_2^2+bx_2+c\\
            ax_1x_2 + b\frac{x_1+x_2}{2} + c.
        \end{cases}
        \end{align*}
    \end{lemma}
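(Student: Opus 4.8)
The plan is to prove the lemma by a short case analysis on the sign of the leading coefficient $a$, using the fact that a quadratic is convex when $a \geq 0$ and concave when $a \leq 0$, and that the only candidate locations for an interior maximum are the endpoints or the vertex.

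First, suppose $a \geq 0$. Then $q$ is convex on $[x_1, x_2]$, so its maximum over the interval is attained at one of the two endpoints, i.e. $\max_{x_1 \leq x \leq x_2} q(x) = \max\{q(x_1), q(x_2)\}$. Since the third quantity $ax_1x_2 + b\tfrac{x_1+x_2}{2} + c$ is simply being added as an extra term inside the outer $\max$, the three-way maximum is trivially an upper bound (it is at least $\max\{q(x_1), q(x_2)\}$, which already equals the true maximum). So nothing more is needed in this case.

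Second, suppose $a < 0$. Then $q$ is concave, and its unrestricted maximum is attained at the vertex $x^\star = -\tfrac{b}{2a}$. If $x^\star \notin [x_1,x_2]$, then $q$ is monotone on the interval and the maximum is again at an endpoint, so it is bounded by $\max\{q(x_1), q(x_2)\}$ and hence by the three-way maximum. If $x^\star \in [x_1, x_2]$, the true maximum is $q(x^\star) = c - \tfrac{b^2}{4a}$; I need to show this is at most the third candidate value $ax_1x_2 + b\tfrac{x_1+x_2}{2} + c$. Subtracting $c$ from both sides, this reduces to showing $-\tfrac{b^2}{4a} \leq ax_1x_2 + b\tfrac{x_1+x_2}{2}$. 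Since $x^\star = -\tfrac{b}{2a} \in [x_1,x_2]$ we can write $b = -2ax^\star$; substituting and dividing through by $a < 0$ (which flips the inequality) turns the claim into $(x^\star)^2 \geq x_1 x_2 - x^\star(x_1+x_2) + 2(x^\star)^2$ after collecting terms, i.e. $0 \geq (x^\star - x_1)(x^\star - x_2)$, which holds precisely because $x_1 \leq x^\star \leq x_2$. (I would double-check the algebra and sign of the division step carefully, but this is the crux.)

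Finally, the degenerate case $a = 0$ (a linear or constant function) is already covered by the $a \geq 0$ branch: the maximum of an affine function on an interval is attained at an endpoint. Collecting the cases proves the stated bound. The only mildly delicate point — and the one I would be most careful with — is the sign bookkeeping when dividing the inequality by the negative quantity $a$ in the concave interior-vertex case; everything else is immediate from convexity/concavity and the endpoint principle.
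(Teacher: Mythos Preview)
Your proof is correct, but it takes a different route from the paper's argument in the concave case. The paper handles $a<0$ geometrically: it writes down the tangent lines $t_{x_1}$ and $t_{x_2}$ to $q$ at the two endpoints, uses concavity to conclude that $\min(t_{x_1}(x),t_{x_2}(x))$ is a piecewise-linear upper bound for $q$ on all of $\mathbb{R}$, computes that these two tangents intersect precisely at the midpoint $\tfrac{x_1+x_2}{2}$ with value $ax_1x_2 + b\tfrac{x_1+x_2}{2}+c$, and then observes that the maximum of this piecewise-linear envelope on $[x_1,x_2]$ is attained at one of $x_1$, $x_2$, or the kink. You instead locate the vertex $x^\star=-\tfrac{b}{2a}$, substitute $b=-2ax^\star$, and reduce the needed inequality to the factored form $(x^\star-x_1)(x^\star-x_2)\le 0$, which holds exactly when the vertex lies in the interval. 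Your argument is more direct and purely algebraic; the paper's tangent construction is slightly longer but has the advantage of explaining \emph{where} the mysterious third term $ax_1x_2+b\tfrac{x_1+x_2}{2}+c$ comes from (it is the value at the intersection of the two endpoint tangents), and it suggests an obvious route to tighter bounds via additional tangent points. Your algebra in the intermediate step is correct, though the line ``$(x^\star)^2 \geq x_1 x_2 - x^\star(x_1+x_2) + 2(x^\star)^2$'' is an unusual rearrangement; the cleaner chain is $-(x^\star)^2 \ge x_1x_2 - x^\star(x_1+x_2)$, i.e.\ $0\ge (x^\star-x_1)(x^\star-x_2)$ directly.
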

    \end{mdframed}
     This lemma is proved in Annex \ref{annex:lemma}. We apply the lemma on $g(\lambda)$ and we obtain that 
    \begin{align}
        \max_{\lambda\in [\lambda_1, \lambda_2]} g(\lambda) \leq \max \begin{cases}
        (A_2 + \lambda_1 D) (\mathbf{y} + \lambda_1 \mathbf{z})\\
        (A_2 + \lambda_2 D) (\mathbf{y} + \lambda_2 \mathbf{z})\\
        A_2\mathbf{y} + D\mathbf{z}\lambda_1\lambda_2+(D\mathbf{y}+A_2\mathbf{z})\frac{\lambda_1+\lambda_2}{2}
    \end{cases}
    \end{align}
    Imposing the following is thus sufficient to obtain $\max_{\lambda\in [\lambda_1, \lambda_2]} g(\lambda) \leq \mathbf{b_2}$:
    \begin{align}
    \begin{cases}
        (A_2 + \lambda_1 D) (\mathbf{y} + \lambda_1 \mathbf{z}) \leq \mathbf{b_2}\\
        (A_2 + \lambda_2 D) (\mathbf{y} + \lambda_2 \mathbf{z}) \leq \mathbf{b_2}\\
        A_2\mathbf{y} + D\mathbf{z}\lambda_1\lambda_2+(D\mathbf{y}+A_2\mathbf{z})\frac{\lambda_1+\lambda_2}{2} \leq \mathbf{b_2}\\
    \end{cases}.\label{rl_p2}
    \end{align}
    The conditions \eqref{rl_p1} and \eqref{rl_p2} are thus sufficient for $\mathbf{y}, \mathbf{z}$ to respect constraints \eqref{ctr_var_1} and \eqref{ctr_var_2}.
\end{proof}
Note that the converse is not true: the set of constraints \eqref{eq:affinespace} are only a sufficient condition for $y$ and $z$ to be an affine robust solutions.
We can derive an upper bound for $f(\lambda)$ by using Theorem \ref{theorem:robust-line}.
\begin{corollary}
    Consider  the set
\begin{alignat}{2}
    S^{AR}_{\lambda_1,\lambda_2} =\{ \quad && \mathbf y, \mathbf z\in \mathbb R^n\mid A_1\mathbf{y} + \lambda_1 A_1\mathbf{z}&\leq \mathbf{b_1}\notag\\
&&A_1\mathbf{y} + \lambda_2 A_1\mathbf{z}&\leq \mathbf{b_1}\notag\\
&&(A_2 + \lambda_1 D) (\mathbf{y} + \lambda_1 \mathbf{z})&\leq \mathbf{b_2} \label{SAR}\\
&&(A_2 + \lambda_2 D) (\mathbf{y} + \lambda_2 \mathbf{z})&\leq \mathbf{b_2}\notag\\
&& A_2\mathbf{y} + D\mathbf{z}\lambda_1\lambda_2+(D\mathbf{y}+A_2\mathbf{z})\frac{\lambda_1+\lambda_2}{2} & \leq \mathbf{b_2}\quad \}.\notag
\end{alignat}
For any $(\mathbf y,\mathbf z)\in S^{AR}$, the function
$h^{AR}(\lambda) = c^T(\mathbf y + \lambda \mathbf z) \geq f(\lambda)$.
\label{cor:AR}
\end{corollary}
When it is clear from the context, we may refer to $S^{AR}_{\lambda_1,\lambda_2}$ as simply $S^{AR}.$    
There can be a large number of $\mathbf{y}$ and $\mathbf{z}$ in the set $S^{AR}$ from Corollary \ref{cor:AR}. 
The question is to select them in order to have the tightest possible bound. We may fix  $\lambda$ to a specific value $\mu\in[\lambda_1,\lambda_2]$ and obtain the tightest possible bound by optimizing the problem 
\begin{align}
    \mathcal P^{AR}_{\lambda_1,\lambda_2}(\mu) \equiv \text{min } c^t(\mathbf y+\mu z) \text{ subject to }(\mathbf y,\mathbf z)\in S^{AR}_{\lambda_1,\lambda_2}.
    \label{eq:h}
\end{align}
An optimal solution $(\mathbf y,\mathbf z)$ to $\mathcal P^{AR}_{\lambda_1,\lambda_2}(\mu)$ provides the upper bound $h^{AR}_{\mathbf y,\mathbf z}(\lambda)$ over the entire interval $[\lambda_1,\lambda_2]$ but there is no guarantee that, by doing so, we obtain a tight bound for the rest of the interval outside of the specific value $\mu$. In the next paragraph, we describe strategies to derive a tight bound.

\subsubsection*{Ways to select $\mathbf{y}$ and $\mathbf{z}$}

There are several degrees of freedom in selecting $\mathbf y$ and $\mathbf z$ from $S^{AR}$ because it doubles the size of the space compared to the initial variables $\mathbf x$. For instance, any constant robust solution $\mathbf{x}$ can be mapped to a solution in $S^{AR}$.

\begin{theorem}
    Any constant robust solution $\mathbf{x}$ provides a solution $\mathbf{y}=\mathbf{x}, \mathbf{z}=\mathbf{0}$ for eq. \eqref{eq:affinespace}.

    \label{th:maprobustaffine}
\end{theorem}
\begin{proof}
    Enforcing $\mathbf{z}=\mathbf{0}$ in eq. \eqref{eq:affinespace} provides the same equations as eq. \eqref{th:constant} defining the space of constant robust solution (after renaming $\mathbf{x}$ to $\mathbf{y})$, with the following additional constraint:
    \begin{align}
        A_2 \mathbf{y} + \frac{\lambda_1+\lambda_2}{2} D \mathbf{y} \leq \mathbf{b}_2.
    \end{align}

    This constraint is redundant with the other constraints (it is a linear combination of the two constraints related to $A_2$), hence the two solution spaces are the same.
\end{proof}

Of course, many more solutions may exist with $\mathbf{z}\neq 0$ and with different values of $\mathbf{y}$.
Most of these solutions will provide upper bounds that are actually not very tight. It is thus important to select \textit{suitable} $\mathbf{y}$ and $\mathbf{z}$.
We propose four ways to select $\mathbf{y}$ and $\mathbf{z}$. Two of these ways are based on the sequential optimisation of two objectives. The third and fourth introduce an additional constraint.
\begin{itemize}
    \item The first method (called the \textit{left} linear robust solution 
    or \textit{robust line left} in the experiments) requires two optimisations. It first optimises $\mathcal{P}^{AR}_{\lambda_1, \lambda_2} (\lambda_1)$, with optimal value $v$. It then solves $\mathcal{P}^{AR}_{\lambda_1, \lambda_2} (\lambda_2)$ with the additional constraint $\mathbf{c}^t(\mathbf{y}+\lambda_1\mathbf{z}) = v$ whose optimal solution is $\mathbf y^l,\mathbf z^l$. The bound 
    $h^{AR}_{\mathbf y^l,\mathbf z^l}(\lambda)$
     is the tightest around $\lambda_1$ and among those, the one that has the lowest possible slope. 
    \item Similarly, the second method works in the same manner but in reverse order, and is called the \textit{right} linear robust solution 
    or \textit{robust line right} in the experiments. It first optimises $\mathcal{P}^{AR}_{\lambda_1, \lambda_2} (\lambda_2)$ with optimal value $w$,  then solves $\mathcal{P}^{AR}_{\lambda_1, \lambda_2} (\lambda_1)$ with the additional constraint $\mathbf{c}^t(\mathbf{y}+\lambda_2\mathbf{z}) = w$.
    \item The third method adds the constraint $\mathbf{c}^t \mathbf{z}= \delta$ to the problem $\mathcal{P}^{AR}_{\lambda_1, \lambda_2} (\lambda_1)$ to force the variable slope to be $\delta$ and optimises the objective $\mathbf{c}^t \mathbf{y}$ (as $\mathbf{c}^t \mathbf{z}$ is now constant). In the experiments, we fix the slopes to two values. The first one is $\delta = \frac{f(\lambda_2)-f(\lambda_1)}{\lambda_2-\lambda_1}$, called robust fixed slope pairwise. The second is $\delta = 0$, called robust yzflat. However, any $\delta$ can be chosen. Note that in the case $\delta=0$, the solution $\mathbf{y}+\lambda \mathbf{z}$ is not necessarily ``flat'' in the solution space (i.e. $\mathbf{z}$ may not be $\mathbf{0}$) as illustrated in the Example \ref{ex:non_flat}.
\end{itemize}

\begin{example}\label{ex:non_flat}
    The problem presented in eq. \eqref{eq:example-robust-1} has no constant robust solution over the range $[-2,2]$ (see Example \ref{example:robust-1}). It has, however, variable robust solutions (even a flat one), as shown in Figure \ref{fig:example-robust-2}. 
    \begin{figure}[H]
        \centering        \includegraphics[width=0.65\textwidth]{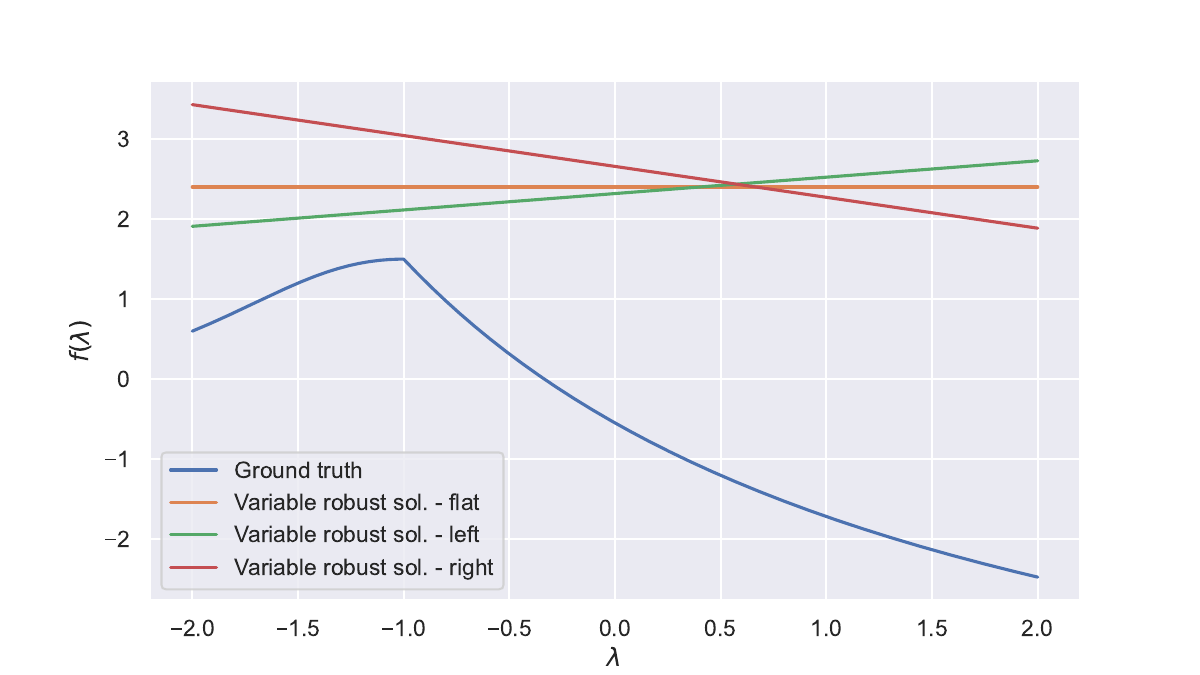}
        \caption{Bounds obtained with the different robust variable solution for the problem \eqref{eq:example-robust-1}.}
        \label{fig:example-robust-2}
    \end{figure}
\end{example}

\subsubsection*{Relation with the constant robust solution}

Example \ref{ex:non_flat} above shows that even when the bound is selected to be flat ($\mathbf{c}^t\mathbf{z}=0)$, it can exists even when the constant robust solution (as presented in Section \ref{sec:constant}) does not. Moreover, any constant robust solution maps to a variable one as shown in Theorem \ref{th:maprobustaffine}.

As a corollary, this implies that the bounds provided by the variable solutions are tighter (and exist more often) than the constant ones:

\begin{corollary}
    The (flat) variable robust solution $\mathbf{y}+\lambda\mathbf{z}$ respecting eq. \eqref{eq:affinespace} with $\mathbf{c}^t\mathbf{z}=0$ and minimizing $\mathbf{c}^t(\mathbf{y}+\lambda\mathbf{z}) = \mathbf{c}^t\mathbf{y}$ provides a better lower bound than the constant robust solution $\mathbf{x}$ minimizing $\mathbf{c}^t\mathbf{x}$ if both exist:
    \begin{align}
        \mathbf{c}^t(\mathbf{y}+\lambda\mathbf{z}) = \mathbf{c}^t\mathbf{y} \leq \mathbf{c}^t\mathbf{x}.
    \end{align}
\end{corollary}

\subsection{Envelope of robust linear solutions}\label{sec:robust_concav_env}
In the previous section, we proved that, on linear problems, we can find a robust variable solution with several possible slopes. In this section, we introduce a bound tightness criterion to find the interval $[\nu_1, \nu_2]$ upon which a given optimal (for a certain value of $\lambda=\nu$) robust variable solution $\mathbf{y}+\lambda\mathbf{z}$ remains optimal over the interval, i.e. on that particular interval, we cannot find another robust variable solution whose objective function improves. From that criterion, coupled with warm starting, we describe a corresponding bound and discuss its strengths and weaknesses. 

Consider eq. \eqref{eq:h} with $\mu=\lambda$, $\mathcal{P}^{AR}_{\lambda_1, \lambda_2} (\lambda)$. An interesting observation is that $\lambda$ is now only present in the objective function; we have reduced a generic problem where modifications are applied to the constraints' coefficients to a simpler problem where the modifications are only located on the objective function coefficients. In principle, we would have to solve this optimization problem for every value of $\lambda$. In the following, we show how to do it without reoptimising for an infinite number of values of $\lambda$.

Assume that we find for a specific value $\nu\in[\lambda_1,\lambda_2]$, an optimal solution $(\mathbf y_\nu,\mathbf z_\nu)$ leading to a bound $h^{AR}_{\mathbf y_\nu,\mathbf z_\nu}(\lambda)$.
Techniques based on reduced costs can be used to find the interval $[\nu_1,\nu_2]$ where $(\mathbf y_\nu,\mathbf z_\nu)$ remains optimal for all problems $\mathcal P^{AR}_{\lambda_1,\lambda_2}(\lambda)$ with $\lambda\in [\nu_1,\nu_2]$.

In order to do this, and to simplify part of the proofs below, we rewrite the problem $\mathcal{P}^{AR}_{\lambda_1, \lambda_2} (\lambda)$ by creating explicit slack variables and putting all the constraints into a single matrix $M$ and the right-hand side in a vector $\mathbf{E}$:

\begin{mini}
{\mathbf{y},\mathbf{z}}
{\begin{pmatrix}
        \mathbf{c}^t & \lambda \mathbf{c}^t & \mathbf{0}
    \end{pmatrix} \begin{pmatrix}
        \mathbf{y} \\ \mathbf{z} \\ \mathbf{s}
    \end{pmatrix}}
{}
{\mathcal{P}^{AR}_{\lambda_1, \lambda_2} (\lambda) \equiv }
\addConstraint{M \begin{pmatrix}
        \mathbf{y} \\ \mathbf{z} \\ \mathbf{s}
    \end{pmatrix}}{= \mathbf{E},}{}
    \addConstraint{\mathbf{s}}{\geq \mathbf{0}.}{}
\end{mini}

We use this formulation to derive the associated theorem.

\begin{theorem}[Robust bound tightness criterion] Given $\nu \in [\lambda_1, \lambda_2]$, let $\langle \mathbf{y}^*, \mathbf{z}^*, \mathbf{s}^* \rangle$ be an optimal basic solution of the problem $\mathcal{P}^{AR}_{\lambda_1, \lambda_2} (\nu)$ and let $B$ and $N$ be the basic and non-basic sets of variables associated to the solution. Additionally, let $\mathbf{r}$ be the vector of reduced costs. For any $\lambda$ such that
\begin{align}
    (\lambda-\nu)
    \left(\begin{pmatrix}
        \mathbf{0} & \mathbf{c}^T_{Nz} & \mathbf{0}
    \end{pmatrix} - 
    \begin{pmatrix}
        \mathbf{0} & \mathbf{c}^T_{Bz} & \mathbf{0}
    \end{pmatrix} M_B^{-1} M_N\right) \geq - \mathbf{r},
\end{align}
$\langle \mathbf{y}^*, \mathbf{z}^*, \mathbf{s}^* \rangle$ is an optimal solution to $\mathcal{P}^{AR}_{\lambda_1, \lambda_2} (\lambda)$, where $\mathbf{c}^T_{B_z}$ (resp. $\mathbf{c}^T_{N_z}$) is the vector $\mathbf{c}^T$ restricted to the variables in $B_z$ (resp. $N_z$), itself the subset of $\mathbf{z}$ variables in $B$ (resp. $N$). 
    \label{theorem:line-range}
\end{theorem}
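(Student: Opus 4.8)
The plan is to treat this statement as a textbook objective-coefficient sensitivity analysis applied to the slack reformulation of $\mathcal{P}^{AR}_{\underline{\lambda}, \overline{\lambda}}(\lambda)$, exploiting the crucial observation already highlighted just before the theorem: in that reformulation $\lambda$ appears \emph{only} in the cost row, and there only multiplying the $\mathbf{z}$-block. First I would write the cost row as an affine perturbation of the cost at $\lambda_1$,
\[
\begin{pmatrix}\mathbf{c}^t & \lambda\mathbf{c}^t & \mathbf{0}\end{pmatrix}
= \begin{pmatrix}\mathbf{c}^t & \lambda_1\mathbf{c}^t & \mathbf{0}\end{pmatrix}
+ (\lambda-\lambda_1)\begin{pmatrix}\mathbf{0} & \mathbf{c}^t & \mathbf{0}\end{pmatrix},
\]
so the perturbation direction is supported exactly on the $\mathbf{z}$-variables. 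Splitting this direction into its basic and non-basic parts yields precisely the row vectors $\begin{pmatrix}\mathbf{0} & \mathbf{c}^T_{Bz} & \mathbf{0}\end{pmatrix}$ and $\begin{pmatrix}\mathbf{0} & \mathbf{c}^T_{Nz} & \mathbf{0}\end{pmatrix}$ that appear in the statement.

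Next I would substitute this into the reduced-cost formula $\mathbf{r}_N(\lambda)=\mathbf{c}_{\mathrm{full}}(\lambda)_N-\mathbf{c}_{\mathrm{full}}(\lambda)_B M_B^{-1}M_N$. Since $M_B$ and $M_N$ do not depend on $\lambda$, $\mathbf{r}_N(\lambda)$ is affine in $\lambda$:
\[
\mathbf{r}_N(\lambda)=\mathbf{r}+(\lambda-\lambda_1)\left(\begin{pmatrix}\mathbf{0} & \mathbf{c}^T_{Nz} & \mathbf{0}\end{pmatrix}-\begin{pmatrix}\mathbf{0} & \mathbf{c}^T_{Bz} & \mathbf{0}\end{pmatrix}M_B^{-1}M_N\right),
\]
where $\mathbf{r}=\mathbf{r}_N(\lambda_1)$ is the reduced-cost vector of the given optimal basis. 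Then I would invoke the standard optimality criterion for a basic feasible solution of a minimisation LP: the basis $B$ remains optimal exactly while $\mathbf{r}_N(\lambda)\geq\mathbf{0}$. Imposing this on the displayed affine expression and moving $\mathbf{r}$ to the right-hand side gives verbatim the inequality in the theorem. Finally, for the value claim: because $B$ is unchanged and the right-hand side $\mathbf{E}$ does not depend on $\lambda$, the basic solution $M_B^{-1}\mathbf{E}$, and hence the whole vertex $\langle\mathbf{y}^*,\mathbf{z}^*,\mathbf{s}^*\rangle$, is the same optimal solution for every $\lambda$ in the identified range; evaluating the (now $\lambda$-dependent) objective at this fixed vertex gives $h^{AR}_{\underline{\lambda}, \overline{\lambda}}(\lambda)=\mathbf{c}^t\mathbf{y}^*+\lambda\,\mathbf{c}^t\mathbf{z}^*=\mathbf{c}^t(\mathbf{y}^*+\lambda\mathbf{z}^*)$.

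The main obstacle I expect is the treatment of the sign-unrestricted variables $\mathbf{y}$ and $\mathbf{z}$: the naive ``all reduced costs $\geq\mathbf{0}$'' criterion applies cleanly to variables with a lower bound of $0$, whereas for a free non-basic variable optimality forces its reduced cost to be exactly $0$, and keeping it $0$ as $\lambda$ moves is a degenerate equality rather than an inequality. I would handle this by assuming the optimal basic solution is non-degenerate (so that, generically, the free variables can be taken basic), or equivalently by splitting each free variable into a difference of two nonnegative variables and checking that the extra reduced-cost conditions collapse into the stated ones; in any case I would flag this non-degeneracy caveat explicitly, since otherwise the range could be a single point. A secondary, purely bookkeeping point is making sure the index sets $B_z$, $N_z$ and the zero-padding of $\mathbf{c}^T$ over the $\mathbf{y}$- and $\mathbf{s}$-blocks are aligned consistently with the block structure of $M$ and $\mathbf{E}$.
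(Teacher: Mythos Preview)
Your proposal is correct and follows essentially the same route as the paper: write the cost row as the cost at $\lambda_1$ plus $(\lambda-\lambda_1)$ times the $\mathbf{z}$-block direction, plug into the reduced-cost formula, and impose nonnegativity. If anything, you are slightly more thorough than the paper, which neither spells out why the optimal vertex (and hence the value $\mathbf{c}^t(\mathbf{y}^*+\lambda\mathbf{z}^*)$) is unchanged nor mentions the free-variable/degeneracy caveat you flag.
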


\begin{proof}
    The reduced costs for the basis related to $\langle \mathbf{y}^*, \mathbf{z}^*, \mathbf{s}^* \rangle$ on the problem $\mathcal{P}^{AR}_{\lambda_1, \lambda_2} (\nu)$ are
    \begin{align}
        \mathbf{r} := \begin{pmatrix}
            \mathbf{c}^T_{N_y} & \nu \mathbf{c}^T_{N_z} & \mathbf{0}
        \end{pmatrix} - \begin{pmatrix}
            \mathbf{c}^T_{B_y} & \nu \mathbf{c}^T_{B_z} & \mathbf{0}
        \end{pmatrix} M_B^{-1} M_N
    \end{align}
    with $\mathbf{r} \geq \mathbf{0}$, as the solution is optimal. If we now consider the same basis on the sightly modified problem $\mathcal{P}^{AR}_{\lambda_1, \lambda_2}(\nu + \delta)$, we obtain that the new reduced costs are:
    \begin{align}
        &\begin{pmatrix}
            \mathbf{c}^T_{Ny} & (\nu+\delta)\mathbf{c}^T_{Nz} & \mathbf{0}
        \end{pmatrix} - \begin{pmatrix}
        \mathbf{c}^T_{By} & (\nu+\delta)\mathbf{c}^T_{Bz} & \mathbf{0}
        \end{pmatrix} M_B^{-1} M_N\\
        =\ &\mathbf{r} + \begin{pmatrix}
            \mathbf{0} & \delta \mathbf{c}^T_{Nz} & \mathbf{0}
        \end{pmatrix} - \begin{pmatrix}
            \mathbf{0} & \delta \mathbf{c}^T_{Bz} & \mathbf{0}
        \end{pmatrix} M_B^{-1} M_N
    \end{align}
    For the basis to stay optimal for the given $\delta$, these reduced costs must be nonnegative. Therefore, we have that
    \begin{align*}
         \begin{pmatrix}
            \mathbf{0} & \delta \mathbf{c}^T_{Nz} & \mathbf{0}
        \end{pmatrix} - \begin{pmatrix}
            \mathbf{0} & \delta \mathbf{c}^T_{Bz} & \mathbf{0}
        \end{pmatrix} M_B^{-1} M_N \geq -\mathbf{r}
    \end{align*}
    
    If we pose $\delta = \lambda - \nu$, we obtain the conditions stated initially. 
\end{proof}

\subsection*{Derived bounding method}

Theorem \ref{theorem:line-range} allows us to find the interval $[\nu_1,\nu_2]$ for which a given robust linear (basic) solution $(\mathbf y,\mathbf z)$ remains optimal for  $\mathcal P^{AR}_{\lambda_1,\lambda_2}(\lambda)$ for all  $\lambda\in[\nu_1,\nu_2]$.
Once the basic solution is known, computing the range of $[\nu_1,\nu_2]$ for which it stays optimal can be done in $\mathcal{O}(m^2 + mn)$ where $n$ is the number of variables in the (full) problem and $m$ the number of constraints. Most of the operations are indeed vector-matrix multiplications and rely on standard warm starting techniques.\\

This procedure paves the way to compute a so-called ``envelope'' of robust linear solutions for linear problems, i.e. an envelope containing the best combination of lower and upper linear robust bounds, in a fast way using simplex warm starting. For minimization problems, the upper bounds give together a concave envelope, while the lower bounds give a convex one. Their combination form a convex space. 

Given an interval $[\lambda_1, \lambda_2]$, the steps to compute this envelope are given as follows: 
\begin{itemize}
    \item First, we solve $\mathcal{P}^{AR}_{\lambda_1, \lambda_2} (\lambda_1)$ and retrieve an optimal basic solution $\langle \mathbf{y}_{\lambda_1}, \mathbf{z}_{\lambda_1}, \mathbf{s}_{\lambda_1}\rangle$with its associated basis.
    \item Second, we compute the largest value of $\delta$ for which the current solution is still optimal, i.e. by using Theorem \ref{theorem:line-range} we take the largest $\delta$ such that the reduced costs of  $\mathcal{P}^{AR}_{\lambda_1,\lambda_2}(\lambda_1+\delta)$ are nonnegative.
    \item Third, we use warm starting methods to reoptimise the problem $\mathcal{P}^{AR}_{\lambda_1, \lambda_2} (\lambda_1+\delta+\epsilon)$. We get a new optimal basis and new optimal solution.
    \item Fourth, we repeat the three previous steps until $\lambda_2$ is reached.
    \item  The algorithm finishes and produces a finite number of solutions (as the polytope of the problem has a finite number of vertices), which all taken together form a concave upper bound (for minimization problems) or a convex lower bound (for maximization ones) for the range. 
\end{itemize}

In Figure \ref{fig:concav-robust}, we illustrate the algorithm on both upper and lower bounds. For the lower bound, the algorithm first finds the linear bound in purple that minimizes the problem for $\lambda=-4$ and its equivalent basis. Then iteratively, it updates the next basis, found by using the optimality criterion, and reoptimises for the several values of $\lambda$ obtaining the bound: pink, grey, yellow and finally light blue in that order. For each reoptimisation, the previous basis is used, upon it, a few simplex iterations are performed to find the new optimal objective and basis.\\

The main drawback of the method resides in the need for an optimal basis whereas the other methods only need an optimal solution. Similarly to the robust variable solution, this method reformulates the problem using $2n$ variables and $2m_1+3m_2$ constraints. Finding an optimal basis would require the use of either the simplex algorithm or interior points methods with crossover enabled. This operation may take a lot of time. However, if an optimal basis is known, the method is fast and accurate. In the experiments this bound is named the robust envelope.

\begin{figure}
\centering\includegraphics[width=0.65\textwidth]{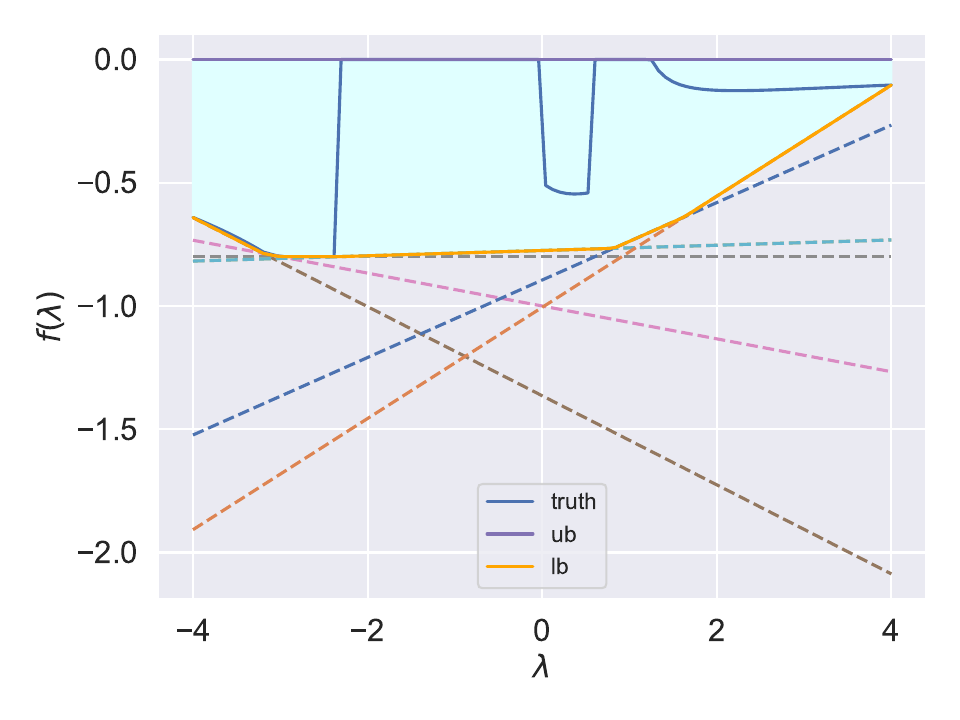}
    \caption{Illustration of the robust envelope algorithm on problem $\mathcal{P}_{\text{toy 1}}$ (see Annex \ref{annex:examples})}
    \label{fig:concav-robust}
\end{figure}



\subsection{Coefficient-wise bounding methods}\label{sec:coeff_wise}

In the previous methods, and especially in the case of the robust constant solution, we look for a robust feasible solution that is valid for the problem with infinitely many constraints.
The drawback of that method (and also of the affine robust solution) is computational. Indeed we double the number of constraints that are 
affected by $D$. The good point is that we keep a linear program, but that linear program is itself computationally more demanding than the
nominal linear program. In this section, we propose two bounding methods : one relaxation (providing a lower bound) and one restriction (providing an upper bound) that keep the size of the nominal linear program. The essential idea is to relax or restrict every coefficient that is potentially modified by the parameter $\lambda$. Throughout this section we need that additional assumption that the variables are all nonnegative.
Observe that we can transform free variables into the difference of two nonnegative variables if we want to apply the techniques presented in this section to problems with free variables. 

To make things clear, we consider the problem
\begin{align}
\begin{split}
    \mathcal{P}^{\geq 0}(\lambda)\equiv\min\quad &\mathbf{c}^t \mathbf{x}\\
    \text{ s.t } \quad&A_1\mathbf{x} \le \mathbf{b_1}\\
    & A_2\mathbf{x}+\lambda D\mathbf{x}\le \mathbf{b_2}\\
    & \mathbf{x} \in \mathbb{R}^{n_c}_+\times \mathbb{Z}^{n_i}_+.
\end{split}\label{eq:nonneg}
\end{align}

First, a tool to define the corresponding relaxation and restriction is defined.

\begin{definition}
Consider a linear function $a\lambda$ of the variable $\lambda$ and the fixed interval $[\lambda_1,\lambda_2]$. Let us define $(a\lambda)^{\uparrow}$ and $(a\lambda)^{\downarrow}$ as
\begin{align}
    (a\lambda)^{\uparrow} &= \max_{\lambda\in [\lambda_1,\lambda_2]} (a\lambda) = \begin{cases}
        a\lambda_1 & \text{ if } a \leq 0\\
        a\lambda_2 & \text{ otherwise}
    \end{cases} &
    (a\lambda)^{\downarrow} &= \min_{\lambda\in [\lambda_1,\lambda_2]} (a\lambda) = \begin{cases}
        a\lambda_1 & \text{ if } a \geq 0\\
        a\lambda_2 & \text{ otherwise}
    \end{cases}.
\end{align}
\end{definition}

\begin{theorem}
The problem
\begin{align}
\begin{split}
    \mathcal{P}^{\sqsubset}(\lambda)\equiv\min\quad &\mathbf{c}^t \mathbf{x}\\
    \text{ s.t } \quad&A_1\mathbf{x} \le \mathbf{b_1}\\
    & \sum_{j=1}^n a^{(2)}_{ij} x_j+\sum_{j=1}^n (\lambda d_{ij})^{\uparrow} x_j\le \mathbf{b_2}\qquad \forall i\\
    & \mathbf{x} \in \mathbb{R}^{n_c}_+\times \mathbb{Z}^{n_i}_+
\end{split}\label{restric}
\end{align} 
is a restriction of $\mathcal{P}^{\geq 0}(\lambda)$ for all $\lambda \in [\lambda_1,\lambda_2]$. Hence, its optimal value is an upper bound of the optimal value of $\mathcal{P}^{\geq 0}(\lambda)$ for all $\lambda \in [\lambda_1,\lambda_2]$. Similarly, the problem
\begin{align}
\begin{split}
    \mathcal{P}^{\sqsupset}(\lambda)\equiv\min\quad &\mathbf{c}^t \mathbf{x}\\
    \text{ s.t } \quad&A_1\mathbf{x} \le \mathbf{b_1}\\
    & \sum_{j=1}^n a^{(2)}_{ij} x_j+\sum_{j=1}^n (\lambda d_{ij})^{\downarrow} x_j\le \mathbf{b_2}\qquad \forall i\\
    & \mathbf{x} \in \mathbb{R}^{n_c}_+\times \mathbb{Z}^{n_i}_+
\end{split}\label{relax}
\end{align} 
is a relaxation of $\mathcal{P}^{\geq 0}(\lambda)$ for all $\lambda \in [\lambda_1,\lambda_2]$. Hence, its optimal value is a lower bound of the optimal value of $\mathcal{P}^{\geq 0}(\lambda)$ for all $\lambda \in [\lambda_1,\lambda_2]$.
\end{theorem}
\begin{proof} (sketch)
 The proof relies on the fact that, for constraints involving only nonnegative variables, increasing all coefficients provides a restriction, whereas decreasing them provides a relaxation.
\end{proof}

\begin{theorem}
    In terms of bound tightness, the restricted version of this bound is always more restrictive (less tight) than the robust flat bound.
\end{theorem}

 \begin{proof}
 The robust flat bound for nonnegative problems can be rewritten as
\begin{align*}
\min\quad &\mathbf{c}^t \mathbf{x}\\
\text{ s.t } \quad&A_1\mathbf{x} \le \mathbf{b_1}\\
& \sum_{j=1}^n a^{(2)}_{ij} x_j+\max_{\lambda\in[\lambda_1, \lambda_2]}(\sum_{j=1}^n \lambda d_{ij}x_j)\le \mathbf{b_2}\qquad \forall i\\
    & \mathbf{x} \in \mathbb{R}^{n_c}_+\times \mathbb{Z}^{n_i}_+
\end{align*}
which is reformulated as 
\begin{align*}
\min\quad &\mathbf{c}^t \mathbf{x}\\
\text{ s.t } \quad&A_1\mathbf{x} \le \mathbf{b_1}\\
& \sum_{j=1}^n a^{(2)}_{ij} x_j+\sum_{j=1}^n \lambda_1 d_{ij}x_j\le \mathbf{b_2}\qquad \forall i\\
& \sum_{j=1}^n a^{(2)}_{ij} x_j+\sum_{j=1}^n \lambda_2 d_{ij}x_j\le \mathbf{b_2}\qquad \forall i\\
    & \mathbf{x} \in \mathbb{R}^{n_c}_+\times \mathbb{Z}^{n_i}_+.
\end{align*}
As we have that $\max_{\lambda\in[\lambda_1, \lambda_2]}(\sum_{j=1}^n \lambda d_{ij}x_j) \leq \sum_{j=1}^n \max_{\lambda\in[\lambda_1, \lambda_2]} \lambda d_{ij}x_j, \forall i$, the terms 
\begin{align*}
    \sum_{j=1}^n a^{(2)}_{ij} x_j+\sum_{j=1}^n (\lambda d_{ij})^{\uparrow} x_j &\geq \sum_{j=1}^n a^{(2)}_{ij} x_j+\sum_{j=1}^n \lambda_1 d_{ij}x_j \\ \text{and }\sum_{j=1}^n a^{(2)}_{ij} x_j+\sum_{j=1}^n (\lambda d_{ij})^{\uparrow} x_j &\geq \sum_{j=1}^n a^{(2)}_{ij} x_j+\sum_{j=1}^n \lambda_2 d_{ij}x_j, \forall i 
\end{align*}
making it harder for the constraint to be respected therefore being more restrictive.
 \end{proof}

However, in terms of computation, this bound keeps the same number of constraints and variables as the nominal problem, making it faster to compute than the robust flat bound.

\subsection{Lower bounds using Lagrangian relaxations}\label{sec:lag_flat}

In this section, we propose two lower bounds based on Lagrangian relaxations of \eqref{eq:separated}. 
Let us fix $\bm{\rho}\in \mathbb R^{m_2}_-$ and define the problem $\mathcal{P}^{L}(\bm{\rho}, \lambda)$ as the Lagrangian relaxation of $\mathcal{P}(\lambda)$, obtained by dualizing the second set of constraints, with dual multipliers $\bm\rho\leq 0$:
    \begin{mini}
    {x}
    {\mathbf{c}^t\mathbf{x} - \bm{\rho}^t ((A_2 + \lambda D) \mathbf{x} - \mathbf{b_2})}
    {}
    {\mathcal{P}^{L}(\bm{\rho}, \lambda) \equiv }
    \addConstraint{A_1 \mathbf{x}}{\leq \mathbf{b_1}}{}
    \addConstraint{\mathbf{x}}{\in \mathbb{R}^{n_c} \times \mathbb{Z}^{n_i}.}\label{lagrange}
    \end{mini}
The optimal objective of $\mathcal{P}^{L}(\bm{\rho}, \lambda)$ is denoted by $h^L(\bm{\rho}, \lambda)$ and provides a lower bound to $f(\lambda)$ for every $\lambda$. In order to define the bounding methods, let us first prove some properties of $h^L(\bm{\rho}, \lambda)$. It is well-known that, for a fixed $\lambda$, the function $h^L(\bm\rho,\lambda)$ is concave. In the following lemma, we show that, for a fixed $\bm\rho$, the function is also concave. 
\begin{lemma}
    Assume that $\bm\rho\leq 0$ is fixed. The function $h(\bm \rho,\lambda)$ is defined over a convex set $ C\subseteq[\lambda_1,\lambda_2]$ and concave in $\lambda$ over that set.
    \label{concavityLag}
\end{lemma}
\begin{proof}
Consider a fixed $\bm \rho \leq 0$.
Observe that the problem \eqref{lagrange} is now back to a linear program with an affinely varying cost vector in $\lambda$.
That is also the case if the variables are mixed integer as we can then replace the constraints $A\mathbf x\leq \mathbf b_1$ by $\text{conv}(\{\mathbf x\in \mathbb R^{n_c}\times \mathbb Z^{n_i}\mid A\mathbf x\leq \mathbf b_1\})\}.$
The resulting value function is concave (see for example \cite{bertsimas}, Theorem 5.3 p217).
This follows from the fact that, the feasible region of \eqref{lagrange} is the same polyhedron, whatever the value of $\lambda$, and therefore 
the optimal value is obtained as the minimum over all extreme points of the polyhedron, resulting in the minimum of a finite number 
of affine functions, which is concave. The set on which the value function is not $-\infty$ is itself convex.
\end{proof}

In practice, it is complicated to determine the right Lagrange multipliers   $\bm\rho$ in order to obtain the tightest possible lower bounds.
Our first suggestion is to use, as Lagrange multipliers the values that are obtained for the dual problem for some specific values of $\lambda$.
For example, if the initial problem does not include integer variables, we can solve the linear problem $\mathcal P(\lambda_1)$ which typically comes with optimal dual variables from which we retrieve
the dual variables associated with the constraints dualized in the Lagrangian relaxation, we denote them by $\bm\rho^*_{\lambda_1}.$
Similarly, we may assume that we compute  $\bm\rho^*_{\lambda_2}.$
To ease notation, we need the following definition.
\begin{definition}
Consider two pairs of real numbers $(x_1,y_1) $ and $(x_2,y_2)$, we denote the linear interpolation between theses two pairs of points as
\begin{align*}
    \ell[(x_1,x_2),(y_1,y_2)] (t) =  y_1 + (t-x_1) \frac{y_2-y_1}{x_2-x_1}.
\end{align*}

\end{definition}
In order to derive the first bounding method using the Lagrangian relaxation \eqref{lagrange}, we can use a specific value of $\bm \rho$ through the following result.
\begin{lemma}
    Consider $\bm \rho\leq 0$ fixed. 
    For every $\lambda \in [\lambda_1,\lambda_2]$, a lower bound to \eqref{lagrange} is given by 
    \begin{align}
    \ell[(\lambda_1,\lambda_2),(h^L(\bm \rho,\lambda_1),h^L(\bm \rho,\lambda_2))](\lambda)\leq  h^L(\bm \rho,\lambda) \leq f(\lambda).    
    \end{align}
    \label{lemmaLagGen}
\end{lemma}
\begin{proof}
    This follows directly from the concavity of $h^L(\bm \rho,\lambda)$ stated in Lemma \ref{concavityLag} for fixed $\bm \rho$, and the fact that any line segment joining two points on the graph of a concave function is a lower bound to that function.
\end{proof}
We can now use Lemma \ref{lemmaLagGen}, with specific values of $\bm \rho$, in the case with no integer variables.
\begin{theorem}
Assume that the initial problem \eqref{eq:separated} has no integer variables, i.e. $n_i=0.$
Let $\bm \rho^*_{\lambda_1}$ (resp. $\bm \rho^*_{\lambda_2})$ be the optimal dual variables related to constraints $A_2\bm x+\lambda_1D\bm x\leq \bm b_2$ (resp.  $A_2\bm x+\lambda_2D\bm x\leq \bm b_2$) in \eqref{eq:separated}
    when the initial problem \eqref{eq:separated} is solved optimally for $\lambda=\lambda_1$ (resp. $\lambda=\lambda_2) .$
    We have
    \begin{align}
        f(\lambda) &\geq  \ell[(\lambda_1,\lambda_2),(f(\lambda_1),h^L(\bm \rho^*_{\lambda_1},\lambda_2))](\lambda)\label{line1LAG}\\
        f(\lambda) &\geq  \ell[(\lambda_1,\lambda_2),(f(\lambda_2),h^L(\bm \rho^*_{\lambda_2},\lambda_1))](\lambda)\label{line2LAG}
    \end{align}
\end{theorem}
\begin{proof}
This follows directly from Lemma \ref{lemmaLagGen} by using $\bm \rho^*_{\lambda_1}$ (resp. $\bm \rho^*_{\lambda_2}$) and the fact that 
$h^L(\bm \rho^*_{\lambda_1},\lambda_1) = f(\lambda_1)$ (resp. $h^L(\bm \rho^*_{\lambda_2},\lambda_2) = f(\lambda_2)$ by strong duality of linear programming.
\end{proof}
Both lines on the right-hand-sides of \eqref{line1LAG} and \eqref{line2LAG} provide a lower bound on $f(\lambda)$. Combining the two by taking their maximum provides in general an even tighter bound. This is what we call the bisegment Lagrangian bound.
\begin{theorem}
    Assume that the initial problem \eqref{eq:separated} has no integer variables, i.e. $n_i=0$. We have
    \begin{align*}
        f(\lambda) \geq \max\{\ell[(\lambda_1,\lambda_2),(f(\lambda_1),h^L(\bm \rho^*_{\lambda_1},\lambda_2))](\lambda),\ell[(\lambda_1,\lambda_2),(f(\lambda_2),h^L(\bm \rho^*_{\lambda_2},\lambda_1))](\lambda)\}.
    \end{align*}
\end{theorem}
If we do have integer variables, Lemma \ref{lemmaLagGen} can also be used, using the optimal dual variables of the linear relaxation, for example.
In that case, we have no guarantee of the optimality of the Lagrangian relaxation for one side of the interval though. \medskip

The potential weakness of Lagrangian relaxation is that, by relaxing the full set of constraints affected by $\lambda$, we may end up with
an optimization problem that is too weakly constrained, often ending up with weak relaxations. Observe that a second bounding method can be obtained by combining the technique with Lagrangian relaxation with coefficient-wise relaxation in order to overcome this issue. For any $\bm \rho\leq 0$, we can therefore consider

\begin{mini}
    {x}
    {\mathbf{c}^t\mathbf{x} - \bm{\rho}^t ((A_2 + \lambda D) \mathbf{x} - \mathbf{b_2})}
    {}
    {\mathcal{P}^{L,\sqsupset}(\bm{\rho}, \lambda) \equiv }
    \addConstraint{A_1 \mathbf{x}}\leq \mathbf{b_1}{}
    \addConstraint{\sum_{j=1}^n x_j + \sum_{j=1}^n (\lambda d_j)^{\downarrow}x_j\leq \mathbf b_2}{}
    \addConstraint{\mathbf{x}\in \mathbb{R}^{n_c} \times \mathbb{Z}^{n_i}.}\label{lagrangeCW}
    \end{mini}
We denote by $h^{L,\sqsupset}(\bm \rho,\lambda)$ the optimal value of \eqref{lagrangeCW}. It is straightforward to see that 
$h^{L,\sqsupset}(\bm \rho,\lambda)$ is a lower bound on $f(\lambda)$ for every $\lambda$. All the results previously derived for the Lagrangian relaxation extend to the relaxation enforced by the coefficient-wise relaxation. The two bounds are illustrated in Figure \ref{fig:illustration_lag}.

\begin{figure}[H]%
    \centering
    \subfloat[\centering Lagrangian bisegment]{{\includegraphics[width=0.4\textwidth]{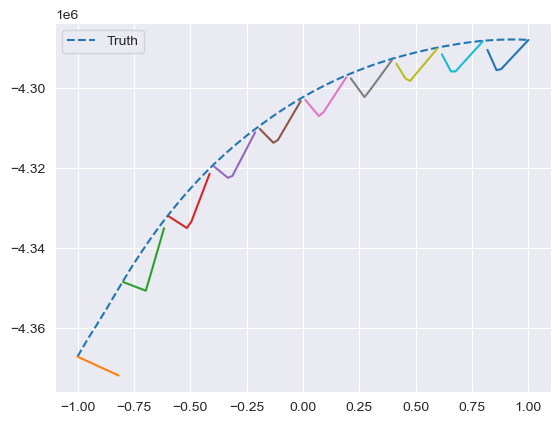} }}%
    \qquad
    \subfloat[\centering Lagrangian bisegment coefficient-wise]{{\includegraphics[width=0.4\textwidth]{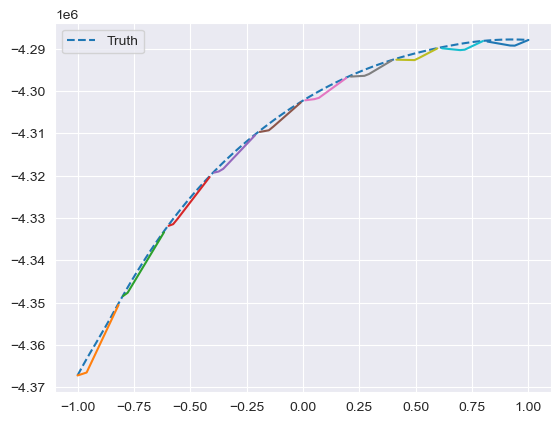} }}%
    \caption{Illustration of the Lagrangian bounds on the Netlib problem Greenbeb where the uncertainties impact inequalities.}%
    \label{fig:illustration_lag}%
\end{figure}

\begin{figure}[h!]
    \centering
    \includegraphics[width=\textwidth]{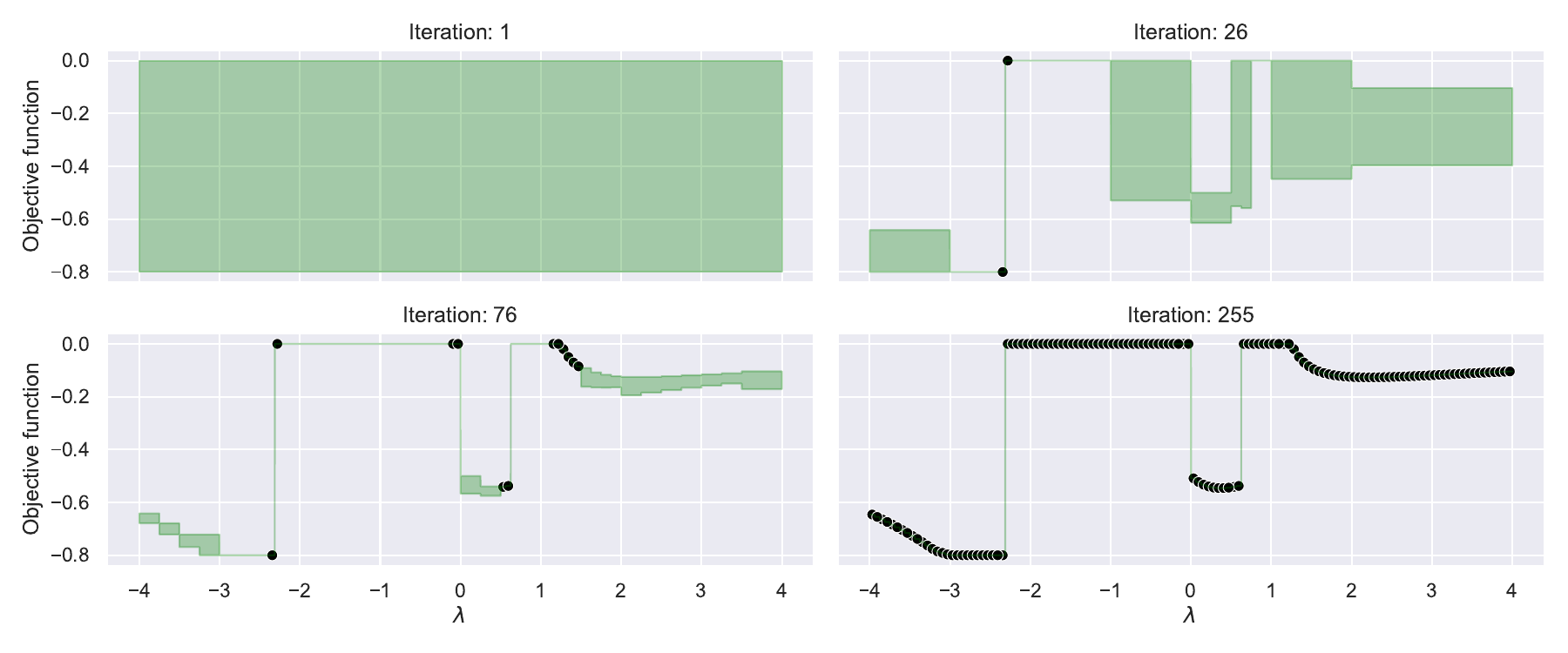}
    \caption{Four steps of the spatial branch and bound algorithm using robust yzflat for both upper and lower bounds on Problem \eqref{ex:toy_1} (Annex \ref{annex:examples}). At any iteration, the algorithms ensures that $f(\lambda)$ lies in the green zone or is equal to the black dots.}
    \label{fig:toy_1_it_alg}
\end{figure}

\section{Spatial branch and bound}\label{sec:it_algo}

The idea behind the spatial branch and bound algorithm is simple: we have two sets of methods, one that generates an upper bound and one a lower bound. The resulting bounds can be refined by reducing the range $[\lambda_1, \lambda_2]$, i.e. dividing the range and reapplying the methods to find new upper and lower bounds for this new smaller range.

Algorithm \ref{alg:iterative} works using this principle. The node strategy chosen maintains a priority queue of ranges to explore, starting with $[\lambda_1, \lambda_2]$. It iteratively selects the range with the largest error (defined by the maximum area between the bounds computed in each range) and refines it, by dividing it into two equal parts and computing the bounds on these two parts, then reinserting them into the priority queue. Degenerate cases exist where the upper/lower bounds cannot be found for a given range (for example when the Lagrangian relaxation is unbounded); in such cases, we write that the lower/upper bounds are respectively $-\infty$ and $\infty$ and that the distance between the bounds is infinite. This can lead the algorithm to exclusively focus on these ranges. In order to avoid this behaviour, we add a stopping condition on the algorithm: when the range length being refined is smaller than a user-defined $\epsilon_\lambda$, the algorithm computes the ground truth $f(\lambda)$ at the middle of the range and stops refining this particular range (i.e. it considers that the error in this range is 0). This is an anytime algorithm: at any point in time, the algorithm maintains the sets of bounds that have been computed. Figure \ref{fig:toy_1_it_alg} shows four different steps of the spatial branch and bound algorithm using robust yzflat for both upper and lower bounds, nominally the $1^{st}$, $26^{th}$, $76^{th}$ and last ($255^{th}$) iteration for Problem \eqref{ex:toy_1} (Toy 1 from Annex \ref{annex:examples}). The black dots are the true $f(\lambda)$ points computed. 

\begin{algorithm}[ht]
\begin{algorithmic}
\Function{it\_alg}{$\mathcal{P}$, $\underline{f}$, $\overline{f}$, $[\lambda_1, \lambda_2], \epsilon_\lambda, T$}
\State \textbf{Input: } $\mathcal{P}$ the problem
\State \phantom{\textbf{Input: }} $\underline{f}: \mathbb{R}^2 \mapsto (\mathbb{R} \mapsto \mathbb{R} \cup \{-\infty\})$ which provides lower bounds
\State \phantom{\textbf{Input: }} $\overline{f}: \mathbb{R}^2 \mapsto (\mathbb{R} \mapsto \mathbb{R} \cup \{\infty\})$ which provides upper bounds
\State \phantom{\textbf{Input: }} $[\lambda_1, \lambda_2]$ the range to consider 
\State \phantom{\textbf{Input: }} $\epsilon_\lambda$ the size of smallest interval to consider before computing a point 
\State \phantom{\textbf{Input: }} $T$ the time limit.
\State \textbf{Output: } $\mathcal{L},\mathcal{U},\mathcal{D}$ the sets of lower bounds, upper bounds and points (resp.) that have been computed.

\\
\State $\mathcal{T} \gets$ empty max-Priority Queue
    \Comment{Todo-list of ranges}
\State $\mathcal{L} \gets \emptyset$
    \Comment{Set of computed lower bounds}
\State $\mathcal{U} \gets \emptyset$
    \Comment{Set of computed upper bounds}
\State $\mathcal{D} \gets \emptyset$
    \Comment{Set of computed points}
\\
\State $lb_i, ub_i \gets \underline{f}(\lambda_1, \lambda_2), \overline{f}(\lambda_1, \lambda_2)$
\State Insert $[\lambda_1, \lambda_2]$ with priority $\max_{\lambda \in [\lambda_1, \lambda_2]} ub_i(\lambda) - lb_i(\lambda)$ in $\mathcal{T}$
\State $\mathcal{L} \gets \mathcal{L} \cup \{lb_i\}$
\State $\mathcal{U} \gets \mathcal{U} \cup \{ub_i\}$
\\
\While {$\mathcal{T} \neq \emptyset$ and time limit is not reached}
    \State $[\underline{\lambda_i}, \overline{\lambda_i}] \gets $ pop range from $\mathcal{T}$ with highest priority
    \State $\text{mid} \gets \frac{\overline{\lambda_i}+\underline{\lambda_i}}{2}$ 
    \If{$\overline{\lambda_i} - \underline{\lambda_i} > \epsilon_\lambda$}
        \State $lb_1, ub_1 \gets \underline{f}(\underline{\lambda_i}, \text{mid}), \overline{f}(\underline{\lambda_i}, \text{mid})$
        \State $lb_2, ub_2 \gets \underline{f}(\text{mid}, \overline{\lambda_i}), \overline{f}(\text{mid}, \overline{\lambda_i})$
        \State Insert $[\underline{\lambda_i}, \text{mid}]$ with priority $\max_{\lambda \in [\underline{\lambda_i}, \text{mid}]} ub_1(\lambda) - lb_1(\lambda)$ in $\mathcal{T}$
        \State Insert $[\text{mid}, \lambda_2]$ with priority $\max_{\lambda \in [\text{mid}, \lambda_2]} ub_2(\lambda) - lb_2(\lambda)$ in $\mathcal{T}$
        \State $\mathcal{L} \gets \mathcal{L} \cup \{lb_1, lb_2\}$
        \State $\mathcal{U} \gets \mathcal{U} \cup \{ub_1, ub_2\}$
    \Else \Comment{stop condition}
        \State $\mathcal{D} \gets \mathcal{D} \cup \{(\text{mid}, f_{\mathcal{P}}(\text{mid}))\}$
    \EndIf
\EndWhile
\State \Return $\mathcal{L}, \mathcal{U}, \mathcal{D}$
\EndFunction
\end{algorithmic}
\caption{Spatial branch and bound algorithm}\label{alg:iterative}
\end{algorithm}

\section{Experiments}\label{sec:expe}
In this section, we discuss the different bounding methods proposed and provide several empirical studies related to them. A summary of the ten different bounds method used in the experiments is given in Table \ref{tab:all_bounds}. The section is divided in three parts: 
\begin{itemize}
    \item We discuss the dataset.
    \item First, we define our three metrics: the availability, error and timing. We then compare the performance of each bounding method in terms of these metrics.
    \item We discuss the performance of the proposed spatial branch and bound algorithm for a combination of lower and upper bounds.
\end{itemize}

\begin{table}[H]
    \centering
    
    \resizebox{\textwidth}{!}{%
    \begin{tabular}{lllll}
         Approach  & Type & Section & Name & Comment\\
         \toprule
         Robust  & Constant & \ref{sec:constant} & Robust flat &\\
         
         Optimization& Linear in $\lambda$ &\ref{sec:variable}& Robust line left&  First optimize for $\lambda_1$ then for $\lambda_2$.\\
         
         &  && Robust line right&  First optimize for $\lambda_2$ then for $\lambda_1$.\\
         
         &  && Robust yzflat &  Add constraint $\mathbf{c^t}\mathbf{z} = 0$.\\
         
         & && Robust fixed slope pairwise & $\mathbf{c^t}\mathbf{z}=\delta$ where $\delta$ is the slope \\
         
         & & & & between the optimum for $\lambda_1$ and $\lambda_2$.\\
         
         & Envelope & \ref{sec:robust_concav_env} & Robust envelope & \\
         
         \midrule
         
         Coefficient-wise & Constant & \ref{sec:coeff_wise} & Coefficient-wise & Requires  $\mathbf x\geq 0.$\\
         
         \midrule
         Lagrangian & Piecewise linear & \ref{sec:lag_flat} &Lagrangian bisegment&\\
         Relaxations& Piecewise linear & \ref{sec:lag_flat}& Lagrangian bisegment coeff& Add coefficient-wise constraints, requires $\mathbf x\geq0$. \\
         \bottomrule
    \end{tabular}
    }
    \caption{Summary of all the bounding methods used in the experiments}
    \label{tab:all_bounds}
\end{table}

\subsection{Dataset}\label{sec:dataset}
To the best of our knowledge, there does not exist a dataset for our category of problems, i.e. where there is a modification on the constraint matrix coefficients, hence, the need to build a dataset. Our proposed dataset is divided into two main categories: LPs and MILPs. The LP instances are further classified into equality and inequality constrained problems, i.e. where the uncertainty matrix $D$ impacts respectively only equality or inequality constraints. The MILP instances include equality and inequality constrained problems from the MIPLIB as well as facility location and unit commitment problems. The dataset is made-of
\begin{itemize}
    \item The LP dataset which consists of 115 instances derived from 81 original problems from the Netlib library\cite{netlib}. For each Netlib problem, we randomly select 100 constraints. Within each selected constraint, three coefficients are randomly chosen and copied into the matrix $D$. Each coefficient in $D$ is then multiplied by a scalar drawn uniformly from $[0.1, 0.9]$, and its sign is flipped with uniform probability. The parameter is defined as $\lambda \in [-1,1]$. We retain only instances that remain feasible for $\lambda = -1, 0, 1$, excluding degenerate cases where $f(-1) = f(0) = f(1)$. All LP instances are further classified into two categories: problems with only parametric inequality constraints (i.e., $Ax + \lambda Dx \le b$) and problems with only parametric equality constraints (i.e., $Ax + \lambda Dx = b$).
    \item The MILP dataset\cite{miplib} which is composed of 31 instances generated from MIPLIB, 72 facility location problems, and 54 unit commitment problems. The MIPLIB-based instances are generated following the same procedure as for the Netlib problems and are likewise divided into equality and inequality categories. The facility location and unit commitment instances are taken from \cite{commalab_datasets, facility_dataset, unit_dataset}. For the facility location problems, we assume that locations with a capacity exceeding 100 require a specific \emph{special} machine whose efficiency is uncertain and varies uniformly between $70\%$ and $110\%$ of the nominal production capacity. For the unit commitment problems, uncertainty is introduced in the load-balance constraint by multiplying the electricity production variable by a random scalar drawn uniformly from $[-0.3, 0.3]$.
\end{itemize}
Because each problem spans a different range of values, we rescale them so that the minimum over the interval $[\lambda_1,\lambda_2]$ is set to 1 and the maximum over the interval $[\lambda_1,\lambda_2]$  to 2, making comparison easier. All problems in the dataset are minimization problems and their size in terms of variables and constraints is shown in Figure \ref{fig:dataset}. The dataset is available on Zenodo \cite{zenodo_code} and the readers are encouraged to use it and extend the dataset with more problems.

\begin{table}[H]
    \centering
    
    \begin{tabular}{lllll}
         Original dataset  & Name & Type & $\#$problems & Comment\\
         \toprule
         Netlib\cite{netlib} & Netlib ineq & Linear & 60 & Only inequality constraints in $D$\\ 
          & Netlib eq & Linear & 55 & Only equality constraints in $D$\\
         \midrule
         MIPLIB\cite{miplib} & MIPLIB ineq & MILP & 19 & Only inequality constraints in $D$\\
          & MIPLIB eq & MILP & 12 & Only equality constraints in $D$\\
          \midrule
         Facility location\cite{facility_dataset} & Facility location & MILP & 72 & Only inequality constraints in $D$\\\midrule
         Unit commitment\cite{unit_dataset} & Unit commitment & MILP & 54 & Only inequality constraints in $D$\\
         \bottomrule
    \end{tabular}
    
    \caption{Summary of all the datasets}
    \label{tab:dataset}
\end{table}


\begin{figure}[h!]
    \centering
    \includegraphics[width=0.8\textwidth]{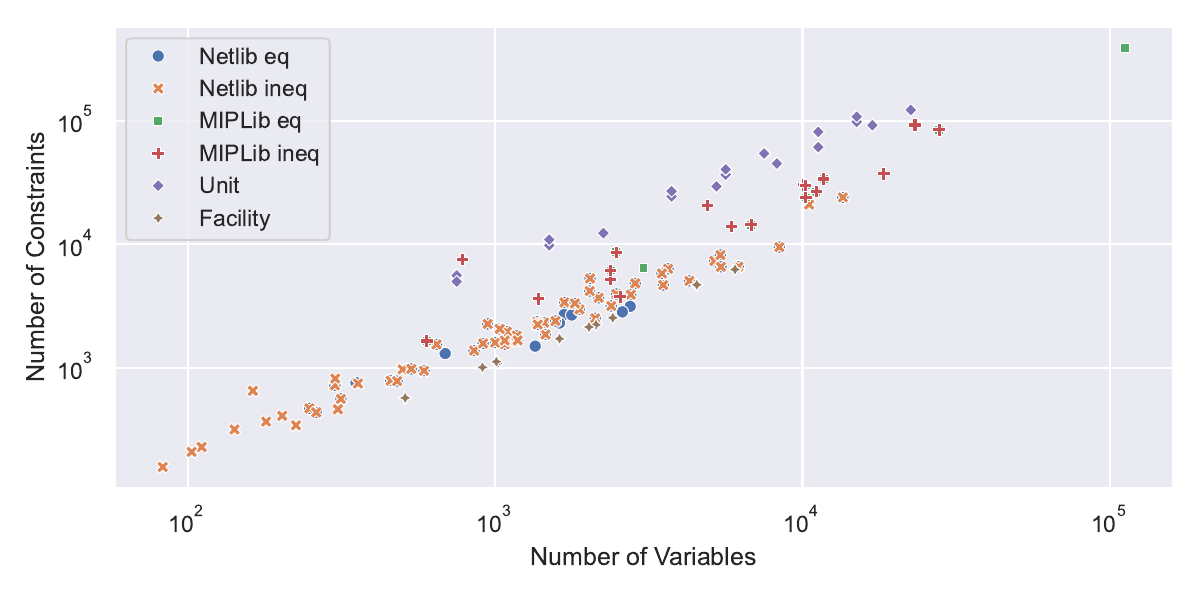}
    \caption{Scatterplot of the number of constraints with respect of the number of variables for all the problems in the dataset by category.}
    \label{fig:dataset}
\end{figure}

\subsection{Bound assessment}\label{sec:bound_assess}
In this experiment, we apply each bounding method to each problem presented in Section \ref{sec:dataset}. We compare the generated bounds in terms of availability (percentage of points for which a bounding method returns a finite value), error and time taken to compute each bound in order to generate upper and lower bounds. We benchmark the performance of each bound based on these criteria. 

\begin{itemize}
    \item \textbf{Availability:} To assess the bounds, we first compute the exact solution $f(\lambda)$ for 100 points uniformly selected in the $[\lambda_1, \lambda_2]$. This set of $\lambda$ is defined as 
\begin{align*}
    \mathcal{S} &= \{\lambda_1+\frac{i}{99}(\lambda_2-\lambda_1) \mid i \in \{0, ..., 99\}\}.
\end{align*}
To define the availability of a bound, we defined the subset of points for which a bound returns a finite value:
\begin{align}
    \mathcal{A}_b &= \{\lambda \in \mathcal{S} \mid \text{bound } b \text{ returns a finite value at }\lambda\}\ .
\end{align}
The \textit{availability} is defined as the percentage of points in $\mathcal{S}$ for which the bound is \textit{available}, i.e. returns a finite value:
\begin{align}
    \text{avail}_b = \frac{|\mathcal{A}_b|}{|\mathcal{S}|} \cdot 100\ .
\end{align}
\item \textbf{Error:} The root mean square error (RMSE) is defined as
\begin{align}
    \text{RMSE}_b = \sqrt{\frac{\sum_{\lambda\in \mathcal{A}_b}(b(\lambda) - f(\lambda))^2}{|\mathcal{A}_b|}}\ .
\end{align}
For each problem in the dataset, we compute 100 points in order to have the exact value of $f(\lambda)$ at these given points. We choose to only compute the error for points where the bound exists. If the bound does not exist, the error is considered to be infinite. \\
\item \textbf{Timing: } As the problems exhibit a wide variability in computation times, ranging from a few seconds to several minutes, let us consider the median time $t_m$ required to compute $f(\lambda)$ for $\lambda \in \mathcal{S}$ and the time to compute a bound $t_b$. The reported relative time is given by $t = \frac{t_b}{t_m}$.
\end{itemize}
In the experiments, we divide the interval $[\lambda_1, \lambda_2]$ on the availability and the RMS error. We split the interval $[\lambda_1, \lambda_2]$ uniformly in $N$ subintervals of same size, 
\begin{align}
    [\lambda_1+\frac{i}{N}(\lambda_2-\lambda_1), \lambda_1+\frac{i+1}{N}(\lambda_2-\lambda_1) ] & & \forall i \in \{0, ..., N-1\}\ .
\end{align}
We illustrate the impact of cutting the interval for $N=1$, $N=5$ and $N=10$ in Figure \ref{fig:toy_2_diff_N} for the robust envelope bound on one particular problem. We compare the bounding methods with $N=10$ subintervals in terms of error, timing and availability on the dataset and look for trends in terms of problem and modification type.
\begin{figure}[ht]
    \centering
    \includegraphics[width=\textwidth]{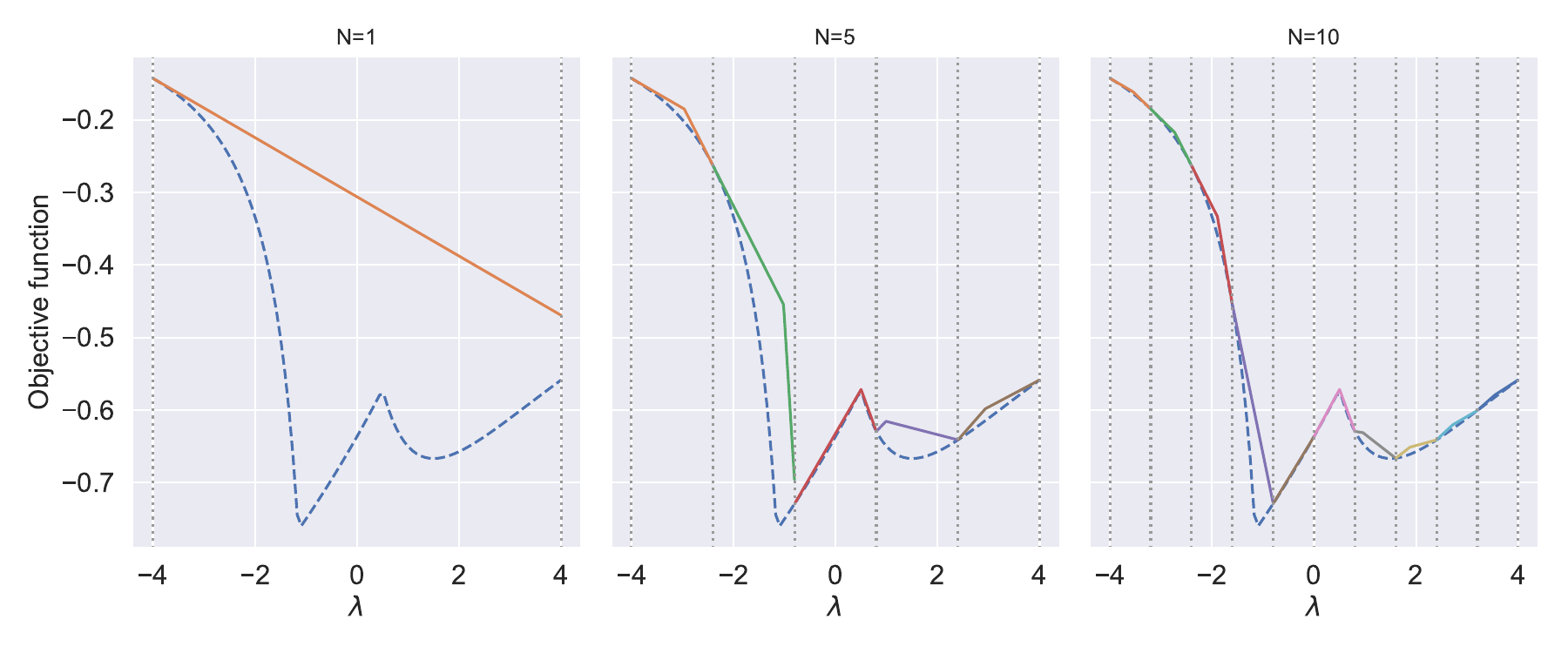}
    \caption{Illustration of robust envelope to provide an upper bound for the problem ``Toy 2" given in \eqref{ex:toy_2} in Annex \ref{annex:examples}.}
    \label{fig:toy_2_diff_N}
\end{figure}

\subsubsection*{Bounding methods comparison}
\begin{table}[ht]
    \centering
\begin{tabular}{lrrrrllllllll}
\toprule
& \multicolumn{4}{c}{Netlib} & \multicolumn{4}{c}{MIPLIB} & \multicolumn{2}{c}{Facility} & \multicolumn{2}{c}{Unit}\\
 & \multicolumn{2}{c}{ineq} & \multicolumn{2}{c}{eq} & \multicolumn{2}{c}{ineq} & \multicolumn{2}{c}{eq} & \multicolumn{2}{c}{location} & \multicolumn{2}{c}{commitment} \\
 & lb & ub & lb & ub & lb & ub & lb & ub & lb & ub & lb & ub \\
\midrule
Robust flat & 40.0 & 99.5 & 70.9 & 18.2 &  & 100 &  & 75.0 &  & 99.4 &  & 100 \\
Robust yzflat & 90.2 & 99.7 & 95.6 & 41.8 &  &  &  &  &  &  &  &  \\
Robust line left & 75.8 & 97.3 & 83.3 & 34.5 &  &  &  &  &  &  &  &  \\
Robust line right & 80.0 & 97.5 & 84.2 & 37.1 &  &  &  &  &  &  &  &  \\
Robust fixed slope pairwise & 91.3 & 100 & 96.0 & 41.8 &  &  &  &  &  &  &  &  \\
Robust envelope & 90.0 & 100 & 98.2 & 41.8 &  &  &  &  &  &  &  &  \\
\midrule
Coefficient-wise & 100 & 98.8 & 100 & 18.2 & 78.9 & 100 & 100 & 75.0 & 100 & 99.4 & 100 & 100 \\
\midrule
Lagrangian bisegment & 37.3 & 47.7 & 32.7 & 9.1 & 78.9 &  & 91.7 &  & 99.4 &  & 100 &  \\
Lagrangian bisegment coef & 82.8 & 89.3 & 100 & 9.1 & 63.2 &  & 83.3 &  & 99.4 &  & 100 &  \\
\bottomrule
\end{tabular}
    \caption{Average availability of bounds in percentage, with N=10 bounds computed. Empty values indicate that the bounding method is not usable for this category of problem/bound type.}
    \label{tab:avail}
\end{table}
\begin{table}[ht]
    \centering
\begin{tabular}{lrrrrllllllll}
\toprule
& \multicolumn{4}{c}{Netlib} & \multicolumn{4}{c}{MIPLIB} & \multicolumn{2}{c}{Facility} & \multicolumn{2}{c}{Unit}\\
 & \multicolumn{2}{c}{ineq} & \multicolumn{2}{c}{eq} & \multicolumn{2}{c}{ineq} & \multicolumn{2}{c}{eq} & \multicolumn{2}{c}{location} & \multicolumn{2}{c}{commitment} \\
 & lb & ub & lb & ub & lb & ub & lb & ub & lb & ub & lb & ub \\
\midrule
Robust flat & 24.55 & 0.08 & 4.49 & 2.68 &  & 0.00 &  & 0.00 &  & 0.05 &  & 0.13 \\
Robust yzflat & 0.53 & 0.06 & 0.12 & 0.13 &  &  &  &  &  &  &  &  \\
Robust line left & 2.64 & 0.00 & 0.17 & 0.17 &  &  &  &  &  &  &  &  \\
Robust line right & 2.61 & 0.00 & 0.19 & 0.14 &  &  &  &  &  &  &  &  \\
Robust fixed slope pairwise & 0.50 & 0.00 & 0.10 & 0.06 &  &  &  &  &  &  &  &  \\
Robust envelope & 0.38 & 0.00 & 0.07 & 0.04 &  &  &  &  &  &  &  &  \\
\midrule
Coefficient-wise & 0.09 & 0.09 & 0.17 & 2.68 & 0.39 & 0.00 & 0.08 & 0.00 & 0.05 & 0.05 & 0.48 & 0.51 \\
\midrule
Lagrangian bisegment & 0.26 & 0.01 & 0.12 & 0.04 & 2.21 &  & 61.87 &  & 2.19 &  & 0.93 &  \\
Lagrangian bisegment coef & 0.10 & 0.00 & 0.01 & 0.04 & 1.69 &  & 0.78 &  & 1.99 &  & 0.93 &  \\
\bottomrule
\end{tabular}
    \caption{Median of RMSE computed on a sampling of 100 points, N=10. }
    \label{tab:error}
\end{table}

\begin{table}[ht]
    \centering
\begin{tabular}{lrrrrllllllll}
\toprule
& \multicolumn{4}{c}{Netlib} & \multicolumn{4}{c}{MIPLIB} & \multicolumn{2}{c}{Facility} & \multicolumn{2}{c}{Unit}\\
 & \multicolumn{2}{c}{ineq} & \multicolumn{2}{c}{eq} & \multicolumn{2}{c}{ineq} & \multicolumn{2}{c}{eq} & \multicolumn{2}{c}{location} & \multicolumn{2}{c}{commitment} \\
 & lb & ub & lb & ub & lb & ub & lb & ub & lb & ub & lb & ub \\
\midrule
Robust flat & 2.4 & 1.3 & 2.3 & 1.0 &  & 1.1 &  & 0.9 &  & 1.1 &  & 1.3 \\
Robust yzflat & 7.5 & 8.0 & 8.7 & 10.4 &  &  &  &  &  &  &  &  \\
Robust line left & 20.7 & 14.9 & 15.8 & 20.8 &  &  &  &  &  &  &  &  \\
Robust line right & 19.5 & 14.1 & 16.7 & 22.8 &  &  &  &  &  &  &  &  \\
Robust fixed slope pairwise & 11.6 & 9.8 & 12.7 & 11.5 &  &  &  &  &  &  &  &  \\
Robust envelope & 284.9 & 113.2 & 261.9 & 555.4 &  &  &  &  &  &  &  &  \\
\midrule
Coefficient-wise & 1.1 & 1.2 & 1.6 & 1.2 & 3.6 & 2.4 & 1.2 & 1.1 & 1.0 & 1.1 & 1.0 & 0.9 \\
\midrule
Lagrangian bisegment & 4.6 & 9.2 & 3.9 & 8.5 & 8.3 &  & 5.9 &  & 1.3 &  & 2.0 &  \\
Lagrangian bisegment coef & 6.0 & 16.5 & 7.6 & 22.7 & 10.4 &  & 7.0 &  & 2.1 &  & 3.9 &  \\
\bottomrule
\end{tabular}
    \caption{Time taken to compute a given bound ($N=1$). Median on all problems from the category. 1=time to compute a single point of $f(\lambda)$}
    \label{tab:time}
\end{table}

The bounding methods are compared in terms of availability, error and timing in Table \ref{tab:avail}, Table \ref{tab:error} and Table \ref{tab:time} respectively. Let us recall that coefficient-wise methods and Lagrangian bisegment coef need all the variables of the problem to be non-negative. \\

First, let us compare the methods in terms of \textbf{availability}. For the Netlib inequality problems, the robust bounding methods are generally more available when computing upper bounds than lower bounds. In contrast, this trend is reversed for the Netlib equality problems, where lower bounds are more available than upper bounds when using the robust methods. A similar trend can be seen when using Lagrangian methods. The coefficent-wise method is always more available for providing lower bounds than upper bounds on the LP problems. For the MILP problems, in terms of availability, the coefficient-wise method has a higher availability when providing upper bounds than lower bounds for the MILP ineq dataset and a higher availability when providing lower bounds for the MILP eq dataset. For the facility location dataset and unit commitment dataset, the coefficient-wise method has a similar availability when providing upper and lower bounds. The robust flat method is only available for upper bounds and the Lagrangian methods only for lower bounds. \\

The best bounding methods in terms of availability is the coefficient-wise method for the Netlib ineq lower bound ($100\%$ available), Netlib eq lower bound ($100\%$ available) and MILP problems. For the Netlib ineq upper bound, the most available methods are the robust fixed slope pairwise and robust envelope with an availability of $100\%$. For the Netlib eq upper bound, the later two methods and the robust yzflat methods have the highest availability of $41.8\%$. For some datasets in the MILP problems, the robust flat and Lagrangian methods can match the availability of the coefficient-wise method. \\

Second, we compare the methods in terms of \textbf{error}. For the LP problems, the robust methods and Lagrangian bisegment method have a smaller error when providing upper bounds than when providing lower bounds. On the LP problems, the error is smaller when providing lower bounds than upper bounds for the coefficient-wise and Lagrangian bisegment coef methods. For the MILP problems, the coefficient-wise method has a smaller error on lower bounds than upper bounds. The best bounding methods in terms of error is the coefficient-wise method for the Netlib ineq lb ($9\%$) and MILP problems, exequo with robust flat except for unit commitment ub. For the Netlib ineq ub, Netlib eq datasets, the smallest error is achieved by the Lagrangian bisegment coef method ($0\%$ - $1\%$ and $4\%$ respectively), joined by the robust line left/right/fixed sloper pairwise and envelope for Netlib ineq ub dataset. For the MIPLIB ub, the robust flat method has the smallest error.\\

Third, we compare the methods in terms of \textbf{timing}. Over the whole dataset, the coefficient-wise and robust flat bounds are the fastest bound to compute with a median timing equivalent to computing a bit above 1 point to get the bound in the linear dataset and up to 3.6 points in the MILP dataset. The robust methods are slower for computing lower bounds than upper bounds on Netlib ineq but faster on Netlib eq. The Lagrangian methods are faster when computing lower bounds than upper bounds on both Netlib eq and Netlib ineq. For the coefficient-wise bound, the difference in timing between computing upper and lower bounds in the linear dataset is not significant. 

\subsection{Spatial branch and bound algorithm}\label{sec:opt_combination}
For the spatial branch and bound algorithm, we consider the coefficient-wise bounding methods for both upper and lower bounds on the Netlib equality and inequality, MIPLIB inequality, unit commitment, and facility location datasets. We do not consider the MIPLIB equality dataset as there are not enough instances solved in a reasonable time. 

\begin{figure}[h!]
    \centering
    \includegraphics[width=0.65\textwidth]{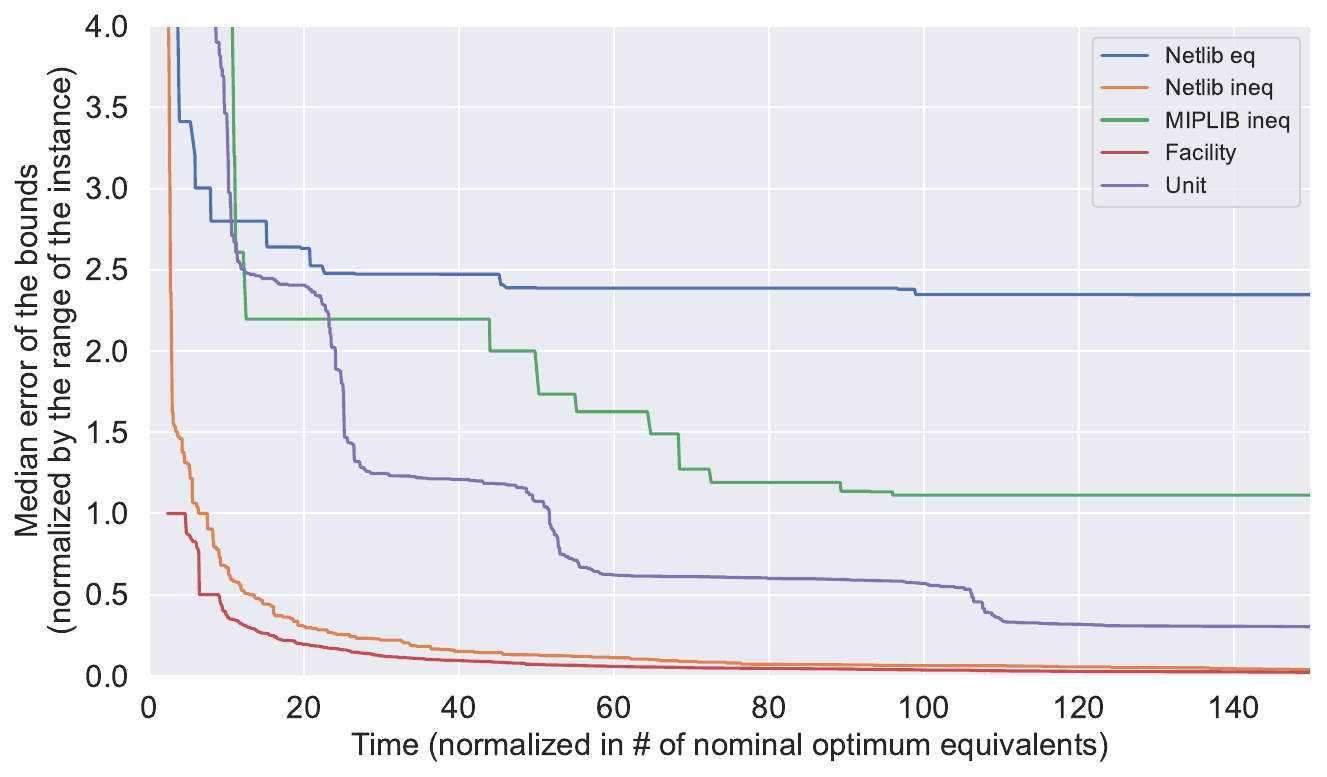}
    \caption{Evolution of the median error (per dataset) with respect to the time (in number of nominal optimum equivalents) for the spatial branch and bound algorithm with coefficient-wise as upper and lower bound.}
    \label{fig:it_alg_results}
\end{figure}

We show how the error evolves in terms of number of iterations and equivalent time to compute one nominal optimum point. For each problem instance, we compute the error by taking the median difference between the best upper and lower bounds found, over 10 000 sampled points. The overall reported error corresponds to the median of these per-problem errors. To compute an instance-independent time, we compute the average time $t_m$ required to compute $f(\lambda)$ for 100 values of $\lambda$ and report the timing $t$ as the current timing $t_c$ divided by $t_m$. The results are shown in Figure \ref{fig:it_alg_results}. We see for all the datasets a steep fall in error with less than 10 equivalent points computed and a stabilization of the error after an equivalent of 120 points computed.

For the Netlib inequality dataset and facility location datasets, we see the steepest decrease in median error. For a time equivalent of $20$ points, the branch and bound algorithm reaches an median error smaller than $0.2$. For the Netlib equality dataset, we see a significant decrease in median error after 20 iterations, but a stabilization at an error of $2.5$ after 20 iterations. For the MIPLIB inequalities and unit commitment dataset, we see a step by step decrease in error and a stabilization after a time equivalent to computing 110  points.

\section{Conclusion}\label{sec:conclusion}

In this paper, we propose a novel approach for assessing the behaviour of the optimal objective function of a problem whose constraint coefficients can linearly vary within a given interval $[\lambda_1, \lambda_2]$. This approach consists of building bounds around the optimal objective function. Building bounds provide strong guarantees on the objective function and helps identify problematic behaviours in the objective that can otherwise be missed.

We present bounding methods based on robust optimization, coefficient-wise reformulation and Lagrangian relaxations. The robust methods are derived in three groups of methods, one that computes constant bounds, the other variable bounds depending on $\lambda$ and finally, envelope bounds. We benchmark all methods and highlight the efficiency of the bounding approach in terms of precision, availability and timing. For most linear and MILP problems, the bounding methods provide a high precision for little computational time. In particular, the coefficient-wise bounding method often offers an interesting tradeoff between precision and timing. However, we also showed that when the uncertainty term $\lambda$ impacts equality constraints in Linear Programming, the bounding methods do not perform well leaving room for improvement and further research for that particular dataset. A summary of when to use each bounding method is provided in Figure \ref{fig:decision_tree}. 

\begin{figure}[H]
    \centering
    \includegraphics[width=0.8\linewidth]{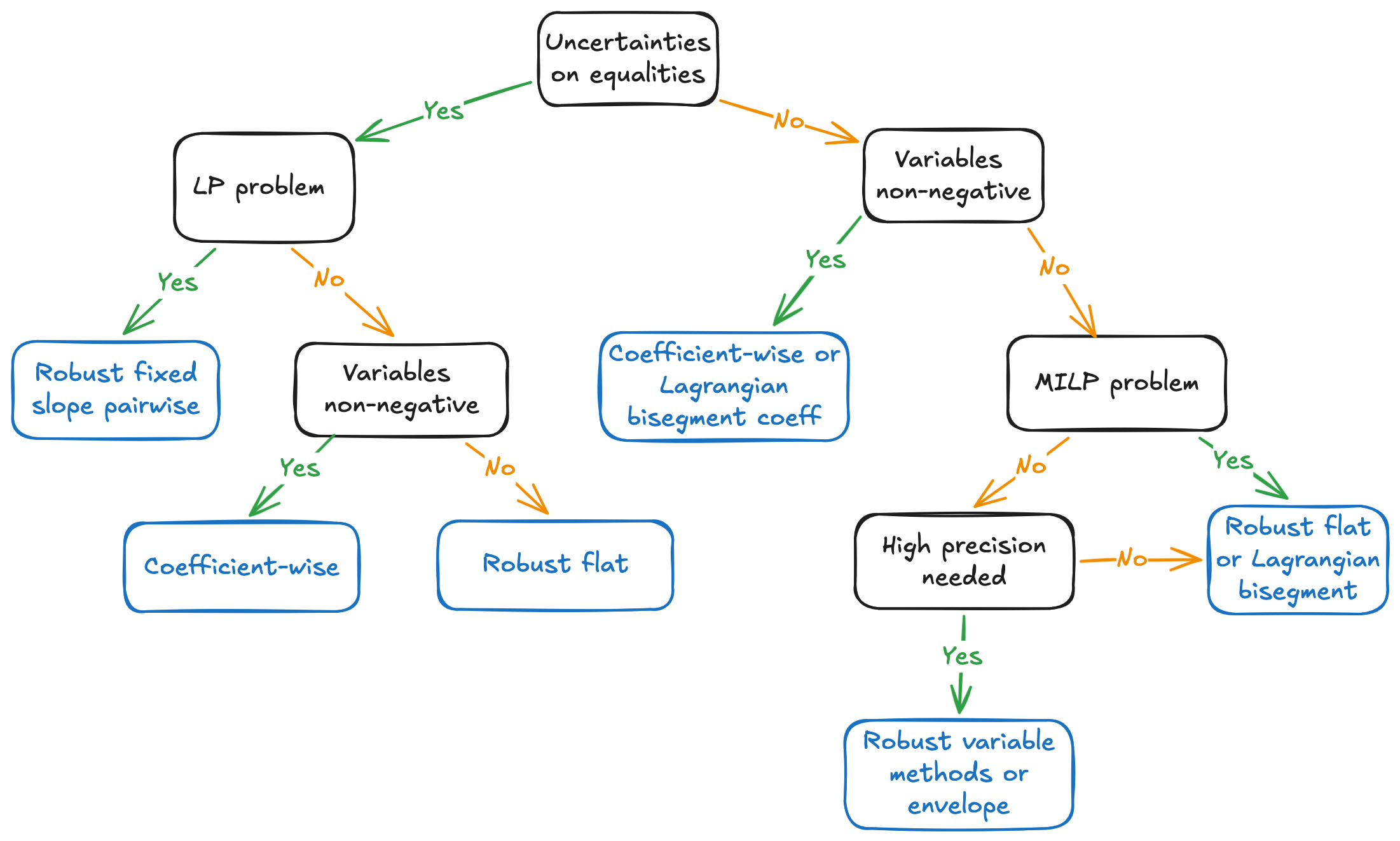}
    \caption{Overview of the decision tree outlining when to use each method}
    \label{fig:decision_tree}
\end{figure}

We propose a spatial branch and bound algorithm. It uses a lower and an upper bounding method and iteratively refines the gap between the two by dividing the interval and reapplying the methods. We benchmark using the coefficient-wise method to compute upper and lower bounds and showed that with 70 points equivalent computation we can achieve very small errors.

As future work, we want to further assess our methods on real-life problems. First, we want to study if tuning the methods for particular problems, instead of going with a fully generic approach, can improve the performances of these methods. For instance, we expect "problem-specific" Lagrangian bounds to outperform their generic counterpart presented in this paper. Second, we want to increase the number of instances in our database for the generic study. To achieve that, we plan on integrating and automating the deployment of these bounding methods with other sensitivity analysis approaches in modelling tools such as JuMP\cite{JuMP}, Pyomo\cite{Pyomo} or The Graph-Based Optimization Modelling Language\cite{gboml}. From a practitioners point of view, such integration would enable users to perform these analysis in a seamless manner. Third, an aspect that is not exploited in this paper is that, beyond merely having upper and lower bounds on $f(\lambda)$, we often possess a feasible solution to the problem for every $\lambda \in [\lambda_1, \lambda_2]$. Such solutions could serve as effective warm-start points, allowing the problem to be reoptimized efficiently when an exact solution is required.

\section*{Disclosure statement}
The authors report there are no competing interests to declare.

\section*{Funding}
The authors gratefully acknowledge the support of the Public Service of Wallonia through the funding of the NKL project in the framework of the Recovery and Resilience Plan (PNRR), initiated and financed by the European Union, and of the SCK-CEN via the SCK-CEN SMR Chair at the University of Liège.

\section*{Availability of data and materials}
The dataset and codes to run the experiments are available on Zenodo \cite{zenodo_code}. The code archive contains a Snakemake \cite{10.12688/f1000research.29032.1} build file allowing to reproduce all experiments. The raw outputs are also available \cite{zenodo_output}.

\bibliographystyle{plain+eid}
\bibliography{ref}

@article{lhs_param_algo,
title = {Parametric optimization with uncertainty on the left hand side of linear programs},
journal = {Computers \& Chemical Engineering},
volume = {60},
pages = {31-40},
year = {2014},
issn = {0098-1354},
doi = {https://doi.org/10.1016/j.compchemeng.2013.08.005},
url = {https://www.sciencedirect.com/science/article/pii/S0098135413002421},
author = {Khalilpour, Rajab and Karimin, Iftekar A.},
keywords = {Parametric programming, Left-hand-side, Uncertainty, Linear program, LP},
}

@book{Pyomo,
title={Pyomo--optimization modeling in python},
author={Bynum, Michael L. and Hackebeil, Gabriel A. and Hart, William E. and Laird, Carl D. and Nicholson, Bethany L. and Siirola, John D. and Watson, Jean-Paul and Woodruff, David L.},
edition={Third},
volume={67},
year={2021},
publisher={Springer Science \& Business Media}
}

@article{JuMP,
    author = {Lubin, Miles and Dunning, Iain},
    title = {Computing in Operations Research Using \text{Julia}},
    journal = {INFORMS Journal on Computing},
    volume = {27},
    number = {2},
    pages = {238-248},
    year = {2015},
    doi = {10.1287/ijoc.2014.0623},
}

@article{Adler,
  author       = {Adler, Ilan and Monteiro, Renato D. C. },
  title        = {A geometric view of parametric linear programming},
  journal      = { Algorithmica},
  volume = {8},
  pages = {161–176},
  
  year         = {1992},
  url          = {https://link.springer.com/article/10.1007/BF01758841#citeas},
}

@article{Zuidwijk,
  author       = {Zuidwijk, Rob A. },
  title        = {Linear Parametric Sensitivity Analysis of the Constraint
Coefficient Matrix in Linear Programs},
  journal      = { ERIM report series research in management},
  volume       = {ERS-2005-055-LIS},
  year         = {2005}
}

@article{obj_param2,
 ISSN = {00963984},
 URL = {http://www.jstor.org/stable/166754},
 abstract = {In Part 1,1 the cost function in the general linear programming problem was parametrized with one parameter and the problem of generating solutions completely studied. In Part 2, a generalization to the n-parameter case is outlined, with special emphasis on the two-parameter case. An example is supplied for this case.},
 author = { Gass, Saul I. and Saaty, Thomas L.},
 journal = {Journal of the Operations Research Society of America},
 number = {4},
 pages = {395--401},
 publisher = {INFORMS},
 title = {Parametric Objective Function (Part 2)- Generalization},
 urldate = {2023-10-05},
 volume = {3},
 year = {1955}
}

@article{gboml,
author = {Miftari, Bardhyl and Berger, Mathias and Derval, Guillaume and Louveaux, Quentin and Ernst, Damien},
title = {GBOML: a structure-exploiting optimization modelling language in Python},
journal = {Optimization Methods and Software},
year = {2023},
publisher = {Taylor \& Francis},
doi = {10.1080/10556788.2023.2246169},
URL = { 
        https://doi.org/10.1080/10556788.2023.2246169
}
}

@article{obj_param1,
 ISSN = {00963984},
 URL = {http://www.jstor.org/stable/166644},
 abstract = {In the linear programming problem where there is a linear function to be optimized (called the objective function), it is desirable to study the behaviour of solutions when the (cost) coefficients in the objective function are parametrized. The problem is then to find the set of xj (j=1,2,⋯,n) which minimizes the linear form ∑ (cj+λ cj′) xj and satisfy the constraints xj≥ 0 and ∑ aijxj=ai0 (i=1,2,⋯,m) where cj, cj′, aij, and ai0 are constants, with at least one cj≠ 0, and λ a parameter. Using the simplex method, a computational procedure is described which enables one, given a feasible solution, to determine the values of the parameter (if any) for which the solution minimizes the objective function; and given a minimum feasible solution how one can generate by the simplex algorithm all minimum feasible solutions and the corresponding values of the parameter. To a minimum solution there corresponds one interval of values of λ (non-degeneracy assumed). The process indicates how to obtain a new solution and the corresponding interval of values of λ which is contiguous to the previous one both having only one point in common.},
 author = {Saaty, Thomas and Gass, Saul},
 journal = {Journal of the Operations Research Society of America},
 number = {3},
 pages = {316--319},
 publisher = {INFORMS},
 title = {Parametric Objective Function (Part 1)},
 urldate = {2023-10-05},
 volume = {2},
 year = {1954}
}

@book{Gal-1994,
url = {https://doi.org/10.1515/9783110871203},
title = {Postoptimal Analyses, Parametric Programming, and Related Topics, Degeneracy, Multicriteria Decision Making, Redundancy},
author = {Gal, Tomas},
publisher = {De Gruyter},
address = {New York},
doi = {doi:10.1515/9783110871203},
isbn = {9783110871203},
year = {1994},
lastchecked = {2023-10-11}
}

@article{Sherman,
 ISSN = {00034851},
 URL = {http://www.jstor.org/stable/2236561},
 author = {Sherman, Jack and Morrison, Winifred J.},
 journal = {The Annals of Mathematical Statistics},
 number = {1},
 pages = {124--127},
 publisher = {Institute of Mathematical Statistics},
 title = {Adjustment of an Inverse Matrix Corresponding to a Change in One Element of a Given Matrix},
 urldate = {2023-10-12},
 volume = {21},
 year = {1950}
}

@book{woodbury,
  title={Inverting Modified Matrices},
  author={Woodbury, Max A.},
  series={Memorandum Report / Statistical Research Group, Princeton},
  url={https://books.google.be/books?id=_zAnzgEACAAJ},
  year={1950},
  publisher={Department of Statistics, Princeton University}
}

@article{Sherman2,
 author = { Sherman, Jack and Morrison, Winifred J.},
 journal = {The Annals of Mathematical Statistics},
 pages = {620-624},
 publisher = {Institute of Mathematical Statistics},
 title = {Adjustment of an Inverse Matrix Corresponding to Changes in the Elements of a Given Column or a Given Row of the Original Matrix},
 volume = {20},
 year = {1949}, 
 doi = {10.1214/aoms/1177729959}
}

@article{gal_multi,
 URL = {https://pubsonline.informs.org/doi/abs/10.1287/mnsc.18.7.406}, 
 author = {Gal, Thomas and Nedoma, Josef},
 journal = {Management Science},
 number = {18},
 pages = {406--422},
 publisher = {INFORMS},
 title = {Multiparametric Linear Programming},
 urldate = {2023-10-12},
 volume = {7},
 year = {1972}
}

@online{commalab_datasets,
  title        = {Datasets},
  author       = {{CommaLAB}},
  year         = {2025},
  url          = {https://commalab.di.unipi.it/datasets/},
  note         = {Accessed: 2025-12-22},
  organization = {Computational Mathematics Laboratory, University of Pisa}
}

@article{unit_dataset,
	author = {Frangioni, A. and Gentile, C.},
	journal = {Mathematical Programming},
	number = {2},
	pages = {225--236},
	title = {Perspective cuts for a class of convex 0--1 mixed integer programs},
	volume = {106},
	year = {2006}}

@article{miplib,
  author = {Gleixner, Ambros and Hendel, Gregor and Gamrath, Gerald and Achterberg, Tobias and Bastubbe, Michael and Berthold, Timo and Christophel, Philipp M. and Jarck, Kati and Koch, Thorsten and Linderoth, Jeff and L\"ubbecke, Marco and Mittelmann, Hans D. and Ozyurt, Derya and Ralphs, Ted K. and Salvagnin, Domenico and Shinano, Yuji},
  title                    = {{MIPLIB 2017: Data-Driven Compilation of the 6th Mixed-Integer Programming Library}},
  journal                  = {Mathematical Programming Computation},
  year                     = {2021},
  doi                      = {10.1007/s12532-020-00194-3},
  url                      = {https://doi.org/10.1007/s12532-020-00194-3}
}

@article{facility_dataset,
title = {An exact algorithm for the capacitated facility location problems with single sourcing},
journal = {European Journal of Operational Research},
volume = {113},
number = {3},
pages = {544-559},
year = {1999},
issn = {0377-2217},
doi = {https://doi.org/10.1016/S0377-2217(98)00008-3},
url = {https://www.sciencedirect.com/science/article/pii/S0377221798000083},
author = {Kaj Holmberg and Mikael Rönnqvist and Di Yuan},
keywords = {Branch-and-bound, Integer programming, Facility location, Lagrangian heuristics},
}

@article{Bowman01121972,
author = {V. Joseph Bowman Jr.},
title = {Sensitivity Analysis in Linear Integer Programming},
journal = {AIIE Transactions},
volume = {4},
number = {4},
pages = {284--289},
year = {1972},
publisher = {Taylor \& Francis},
doi = {10.1080/05695557208974864},
URL = { 
        https://doi.org/10.1080/05695557208974864
},
}

@article{Anderson2023,
title = "MILP Sensitivity Analysis for the Objective Function Coefficients",
abstract = "This paper presents a new approach to sensitivity analysis of the objective function coefficients in mixed-integer linear programming (MILP). We determine the maximal region of the coefficients for which the current solution remains optimal. The region is maximal in the sense that, for variations beyond this region, the optimal solution changes. For variations in a single objective function coefficient, we show how to obtain the region by biobjective mixed-integer linear programming. In particular, we prove that it suffices to determine the two extreme nondominated points adjacent to the optimal solution of the MILP problem. Furthermore, we show how to extend the methodology to simultaneous changes to two or more coefficients by use of multiobjective analysis. Two examples illustrate the applicability of the approach.",
author = "Andersen, {Kim Allan} and Boomsma, {Trine Krogh} and Nielsen, {Lars Relund}",
year = "2023",
doi = "10.1287/ijoo.2022.0078",
language = "English",
volume = "5",
pages = "92--109",
journal = "INFORMS Journal on Optimization",
issn = "2575-1484",
publisher = "Institute for Operations Research and Management Sciences",
number = "1",

}

@article{Dawande2000,
author = "M. W. Dawande and John N. Hooker",
title = "{Inference-Based Sensitivity Analysis for Mixed Integer/Linear Programming}",
year = "2000",
month = "8",
url = "https://kilthub.cmu.edu/articles/journal_contribution/Inference-Based_Sensitivity_Analysis_for_Mixed_Integer_Linear_Programming/6706028",
doi = "10.1184/R1/6706028.v1"
}

@article{Geoffrion,
 ISSN = {00251909, 15265501},
 URL = {http://www.jstor.org/stable/2629979},
 abstract = {The purpose of this paper is to take stock of what is known and to suggest some conceptual foundations for future progress in the areas of postoptimality analysis and parametric optimization techniques for integer programming.},
 author = {Geoffrion, Arthur M. and Nauss, Ruth},
 journal = {Management Science},
 number = {5},
 pages = {453--466},
 publisher = {INFORMS},
 title = {Parametric and Postoptimality Analysis in Integer Linear Programming},
 urldate = {2023-10-12},
 volume = {23},
 year = {1977}
}

@book{bertsimas,
  added-at = {2007-07-05T16:17:35.000+0200},
  author = {Bertsimas, Dimitris and Tsitsiklis, John N.},
  biburl = {https://www.bibsonomy.org/bibtex/28efda5490c1a6791e238842019d5a42e/jleny},
  description = {bandit problems},
  interhash = {5f8b8e1f3575c667656c8f8853595583},
  intrahash = {8efda5490c1a6791e238842019d5a42e},
  keywords = {imported},
  publisher = {Athena Scientific},
  timestamp = {2007-07-05T16:17:36.000+0200},
  title = {Introduction to linear optimization},
  year = 1997
}

@article{jansen,
title = {Sensitivity analysis in linear programming: just be careful!},
journal = {European Journal of Operational Research},
volume = {101},
number = {1},
pages = {15-28},
year = {1997},
issn = {0377-2217},
doi = {https://doi.org/10.1016/S0377-2217(96)00172-5},
url = {https://www.sciencedirect.com/science/article/pii/S0377221796001725},
author = {Jansen, Benjamin and {de Jong}, Johan J. and Roos, Cees and Terlaky, Tamas},
keywords = {Programming linear, Programming parametric, Practice, Sensitivity analysis, Lagrange multipliers},
abstract = {In this paper we review the topic of sensitivity analysis in linear programming. We describe the problems that may occur when using standard software and advocate a framework for performing complete sensitivity analysis. Three approaches can be incorporated within it: one using bases, an approach using the optimal partition and one using optimal values. We elucidate problems and solutions with an academic example and give results from an implementation of these approaches to a large practical linear programming model of an oil refinery. This shows that the approaches are viable and useful in practice.}
}

@article{flavell1975approach,
  title={An approach to sensitivity analysis},
  author={Flavell, Richard and Salkin, Gerald R.},
  journal={Journal of the Operational Research Society},
  volume={26},
  pages={857--866},
  year={1975},
  publisher={Springer}
}

@article{netlib,
    author = {Gay, David M.},
    title = {Electronic Mail Distribution of Linear Programming Test Problems},
    journal = {Mathematical Programming Society COAL Newsletter},
    year = {1985}, 
    volume = {13}, 
    pages={10-12}, 
    url = {https://www.netlib.org/lp/}
}

@Inbook{Berkelaar1997,
author="Berkelaar, Arjan B.
and Roos, Kees
and Terlaky, Tam{\'a}s",
title="The Optimal Set and Optimal Partition Approach to Linear and Quadratic Programming",
bookTitle="Advances in Sensitivity Analysis and Parametic Programming",
year="1997",
publisher="Springer US",
address="Boston, MA",
pages="159--202",
abstract="In this chapter we describe the optimal set approach for sensitivity analysis for LP. We show that optimal partitions and optimal sets remain constant between two consecutive transition-points of the optimal value function. The advantage of using this approach instead of the classical approach (using optimal bases) is shown. Moreover, we present an algorithm to compute the partitions, optimal sets and the optimal value function. This is a new algorithm and uses primal and dual optimal solutions. We also extend some of the results to parametric quadratic programming, and discuss differences and resemblances with the linear programming case.",
isbn="978-1-4615-6103-3",
doi="10.1007/978-1-4615-6103-3_6",
url="https://doi.org/10.1007/978-1-4615-6103-3_6"
}

@misc{zenodo_output,
  author       = {Miftari, Bardhyl and
                  Derval, Guillaume},
  title        = {{Paper: ``Sensitivity analysis for linear changes of 
                   the constraint matrix of a linear program" output}},
  month        = Feb,
  year         = 2026,
  publisher    = {Zenodo},
  doi          = {10.5281/zenodo.18803958},
  url          = {https://doi.org/10.5281/zenodo.18803958}
}

@misc{zenodo_code,
  author       = {Miftari, Bardhyl and Derval, Guillaume},
  title        = {{MiftariB/lhs-bounding-sensitivity: Paper February version}},
  month        = Feb,
  year         = 2026,
  publisher    = {Zenodo},
  version      = {paper-october},
  doi          = { 10.5281/zenodo.18804377 },
  url          = {https://doi.org/10.5281/zenodo.18804377}
}

@Article{ 10.12688/f1000research.29032.1,
AUTHOR = { Möder, Félix and Jablonski, Kim Philipp and Letcher, Brice and Hall, Michael B. and Tomkins-Tinch, Christopher H. and Sochat, Vanessa and Forster, Jan and Lee, Soohyun and Twardziok, Sven O. and Kanitz, Alexander and Wilm, Andreas and Holtgrewe, Manuel and Rahmann, Sven and Nahnsen, Sven and Köter, Johannes},
TITLE = {Sustainable data analysis with Snakemake},
JOURNAL = {F1000Research},
VOLUME = {10},
YEAR = {2021},
NUMBER = {33},
DOI = {10.12688/f1000research.29032.1}
}
\newpage
\appendix
\section{APPENDIX}
\subsection{Lemma quadratic function}\label{annex:lemma}
This Lemma is used in Section \ref{sec:variable}.

\begin{lemma}
Given a quadratic function $q(x)=ax^2+bx+c$. The maximum of $q(x)$ over $x_1 \leq x \leq x_2$ is upper bounded by 
\begin{align}
    \max \begin{cases}
        ax_1^2+bx_1+c\\
        ax_2^2+bx_2+c\\
        ax_1x_2 + b\frac{x_1+x_2}{2} + c
    \end{cases}
\end{align}
\label{lemma:quadratic-tangents}
\end{lemma}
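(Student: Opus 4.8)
The plan is to distinguish cases according to the sign of the leading coefficient $a$ and, when $a<0$, according to whether the vertex $x^\star=-b/(2a)$ of the parabola lies inside $[x_1,x_2]$. In each case the maximum of $q$ over $[x_1,x_2]$ is a well-known quantity, and one only has to check that this quantity is at most one of the three branches of the stated $\max$.

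If $a\ge 0$, then $q$ is convex on $[x_1,x_2]$, so $\max_{x\in[x_1,x_2]}q(x)=\max(q(x_1),q(x_2))$, which is at most the claimed bound since its first two branches are exactly $q(x_1)=ax_1^2+bx_1+c$ and $q(x_2)=ax_2^2+bx_2+c$. The same argument disposes of the case $a<0$ with $x^\star\notin[x_1,x_2]$: on that interval $q$ is then monotone, so again the maximum is attained at an endpoint and is bounded by the first two branches. These are the routine cases.

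The only remaining case, and the place where the third branch is needed, is $a<0$ with $x_1\le x^\star\le x_2$. There the maximum of $q$ over $[x_1,x_2]$ equals the vertex value $q(x^\star)=c-b^2/(4a)$, so it suffices to show $q(x^\star)\le ax_1x_2+b(x_1+x_2)/2+c$, equivalently $-b^2/(4a)\le ax_1x_2+b(x_1+x_2)/2$. Multiplying through by $4a<0$ (which reverses the inequality) and rearranging, this reduces to $(b+2ax_1)(b+2ax_2)\le 0$, and establishing this last inequality is the crux of the proof. It follows from the case hypothesis: writing $b+2ax_1=2a(x_1-x^\star)$ and $b+2ax_2=2a(x_2-x^\star)$, the bounds $x_1\le x^\star\le x_2$ give $x_1-x^\star\le 0\le x_2-x^\star$, and since $a<0$ the two factors $b+2ax_1$ and $b+2ax_2$ have opposite signs, so their product is nonpositive. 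Combining the three cases proves the lemma. The only genuinely nonroutine step is the reduction to, and verification of, the sign condition $(b+2ax_1)(b+2ax_2)\le 0$; everything else is bookkeeping about where a parabola attains its extremum on an interval.
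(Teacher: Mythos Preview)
Your proof is correct, but it proceeds by a genuinely different route than the paper. Both handle the convex case $a\ge 0$ identically (max at an endpoint). For the concave case, the paper bounds $q$ from above by the two tangent lines $t_{x_1}$ and $t_{x_2}$ at the endpoints; concavity gives $q(x)\le \min(t_{x_1}(x),t_{x_2}(x))$, and since this envelope is piecewise linear, its maximum over $[x_1,x_2]$ is attained either at an endpoint or at the intersection of the two tangents, which they compute to be the midpoint with value $ax_1x_2+b(x_1+x_2)/2+c$. Your argument instead splits on whether the vertex lies in the interval and, when it does, verifies directly that the vertex value $c-b^2/(4a)$ is dominated by the third branch via the algebraic reduction to $(b+2ax_1)(b+2ax_2)\le 0$. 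Your approach is a clean self-contained computation; the paper's tangent construction has the advantage of \emph{explaining} where the mysterious third expression $ax_1x_2+b(x_1+x_2)/2+c$ comes from (it is the value of either tangent at the midpoint), which is useful context when the lemma is applied to linearise the robust constraint.
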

\begin{proof}
    We analyse separately the concave and the convex case:
    \begin{itemize}
        \item If the function $q(x)$ is convex, that is if $a \geq 0$, then the maximum of the function in the polyhedron $[x_1,x_2]$. The equation \eqref{lemma:quadratic-tangents} is known to be attained at one of its vertices, either $x_1$ or $x_2$.
        \item For the concave case, let us consider the two tangents at $x_1$ and $x_2$:
    \begin{align}
        t_{x_1}(x) = ax_1^2+bx_1+c + (x-x_1) (2ax_1+b) = c + 2ax_1x - ax_1^2 + bx\\
        t_{x_2}(x) = ax_2^2+bx_2+c + (x-x_2) (2ax_2+b) = c + 2ax_2x - ax_2^2 + bx.
    \end{align}
    If $q(x)$ is concave ($a \leq 0$) then any tangent is an upper bound for the function for any $x$. Taking two tangent at different points and taking the minimum of the two functions is also an upper bound. That is, for any $x_1 < x_2$:
            \begin{align}
                \min(t_{x_1}(x), t_{x_2}(x)) \geq q(x)\quad \forall x.
            \end{align}
            $\min(t_{x_1}(x), t_{x_2}(x))$ forms a piecewise linear function. The tangents $t_{x_1}$ and $t_{x_2}$ intersect at the middle point of $[x_1, x_2]$:
            \begin{align}
                &\quad t_{x_1}(x) = t_{x_2}(x)\\
                & \quad 2ax_1x - ax_1^2 = 2ax_2x - ax_2^2\\
                & \quad  2x_1x - x_1^2 = 2x_2x - x_2^2\\
                & \quad x = \frac{x_2^2 - x_1^2}{2(x_2-x_1)} = \frac{x_1+x_2}{2}
            \end{align}
            with the value $t_1(\frac{x_1+x_2}{2}) = ax_1x_2 + b\frac{x_1+x_2}{2} + c$. 
            
            Now, the maximum of the function $\min(t_{x_1}(x), t_{x_2}(x))$ over $[x_1, x_2]$ is achieved either on $t_{x_1}(x_1)$ or $t_{x_2}(x_2)$ or in the middle of the range, at $t_1(\frac{x_1+x_2}{2})$:
            \begin{align}
                \max_{x\in[x_1,x_2]} \min(t_{x_1}(x), t_{x_2}(x)) = \max \begin{cases}
                    ax_1^2+bx_1+c\\
                    ax_2^2+bx_2+c\\
                    ax_1x_2 + b\frac{x_1+x_2}{2} + c
                \end{cases}\label{max_q}
            \end{align}
    \end{itemize}
\end{proof}

\subsection{Dataset}\label{annex:examples}
In this Appendix, we provide a full overview the illustrative problems. 

\subsubsection{Illustrative problems}
This category contains three small two-variable problems that are used for the sake of illustration rather than being of particular interest. 

\begin{enumerate}
    \item \textbf{Toy 1:}
\begin{align}\label{ex:toy_1}
\begin{split}
    \mathcal{P}_{\text{toy 1}}(\lambda) \equiv \min\quad &\begin{pmatrix}-1  & 2\end{pmatrix} \begin{pmatrix}
        x \\ y
    \end{pmatrix}\\
    \text{ s.t } \quad& \begin{pmatrix}
        -2 & -1 \\
        1 & 2\\
        -1 & 1\\
        1 & -1 \\
        1 & -3\\
        2 & 0
    \end{pmatrix}
    \begin{pmatrix}
        x \\ y
    \end{pmatrix}
    \leq \begin{pmatrix}
        3\\ 0\\ 2\\ 1\\ 1 \\3
    \end{pmatrix}\\
    & \begin{pmatrix}
        3 & -1 \\ -3 & 1 \\ -2 & 1 \\ 2 & -1\\ -1 & -2 \\ -1 & -3
    \end{pmatrix}\begin{pmatrix}
        x \\ y
    \end{pmatrix}+ \lambda \begin{pmatrix}
        3 & 3 \\ 3 & -2 \\ 2 & 3 \\ 2 & -1 \\ -2 & 1 \\ 0 & -2
    \end{pmatrix}\begin{pmatrix}
        x \\ y
    \end{pmatrix}
    \leq 
    \begin{pmatrix}
        0\\ 0\\ 0\\ 3\\ 3\\ 1
    \end{pmatrix}\\
    & \forall \lambda \in [-4, 4]
\end{split}
\end{align}
    \item \textbf{Toy 2:}
    \begin{align}\label{ex:toy_2}
\begin{split}
    \mathcal{P}_{\text{toy 2}}(\lambda) \equiv \min\quad &\begin{pmatrix}0  & 1\end{pmatrix} \begin{pmatrix}
        x \\ y
    \end{pmatrix}\\
    \text{ s.t } \quad& \begin{pmatrix}
        -2 & 0 \\
        2 & 2\\
        -1 & 0\\
        -1 & -1 \\
        0 & 1\\
        -3 & -2
    \end{pmatrix}
    \begin{pmatrix}
        x \\ y
    \end{pmatrix}
    \leq \begin{pmatrix}
        1\\ 2\\ 2\\ 3\\ 3 \\1
    \end{pmatrix}\\
    & \begin{pmatrix}
        2 & 3 \\ 0 & 1 \\ 1 & -3 \\ 0 & -3\\ 3 & 0 \\ -2 & 3
    \end{pmatrix}\begin{pmatrix}
        x \\ y
    \end{pmatrix}+ \lambda \begin{pmatrix}
        3 & 3 \\ 0 & 2 \\ -2 & -1 \\ 2 & 1 \\ 2 & 0\\2 & 2
    \end{pmatrix}\begin{pmatrix}
        x \\ y
    \end{pmatrix}
    \leq 
    \begin{pmatrix}
        3\\ 1\\ 2\\ 3\\ 3\\ 1
    \end{pmatrix}\\
    & \forall \lambda \in [-4, 4]
\end{split}
\end{align}
    \item \textbf{Toy 3:}
    This problem is the one written in \eqref{example:toy}. We rewrite it here for the sake of completeness.
    \begin{align*}
\begin{split}
    \mathcal{P}_{\text{toy 3}}(\lambda) \equiv \min\quad &\begin{pmatrix}2  & -2\end{pmatrix} \begin{pmatrix}
        x \\ y
    \end{pmatrix}\\
    \text{ s.t } \quad& \begin{pmatrix}
        -2 & 2 \\
        -1 & 0
    \end{pmatrix}
    \begin{pmatrix}
        x \\ y
    \end{pmatrix}
    \leq \begin{pmatrix}
        4 \\ 1
    \end{pmatrix}\\
    & \begin{pmatrix}
        2 & 1 \\ -2 & -3 \\ 2 & 2 \\ -1 & -4
    \end{pmatrix}\begin{pmatrix}
        x \\ y
    \end{pmatrix} + \lambda \begin{pmatrix}
        -1 & -4 \\ 0 & 4 \\ -4 & -3 \\ 2 & 4
    \end{pmatrix}\begin{pmatrix}
        x \\ y
    \end{pmatrix}
    \leq 
    \begin{pmatrix}
        4 \\ 2 \\ 0 \\ 2
    \end{pmatrix}\\
    & \forall \lambda \in [-10, 9]
\end{split}
\end{align*}
\item \textbf{Toy 4: } This problem is the one written in \eqref{eq:example-robust-1}. We rewrite it for the sake of completeness.
\begin{align}
\begin{split}
    \mathcal{P}_{\text{toy 4}}(\lambda) \equiv \min_{x, y}\quad &\begin{pmatrix}
        -2 & -2
    \end{pmatrix}
    \begin{pmatrix}
        x \\ y
    \end{pmatrix}\\
    \text{ s.t } \quad &\begin{pmatrix}
        3 & 1
    \end{pmatrix}\begin{pmatrix}
        x \\ y
    \end{pmatrix} \leq \begin{pmatrix}
        3
    \end{pmatrix}\\
    &\begin{pmatrix}
        -5 & -2 \\
        1 & 4
    \end{pmatrix}
    \begin{pmatrix}
        x \\ y
    \end{pmatrix}+\lambda \begin{pmatrix}
        -3 & -2 \\
        -3 & 0
    \end{pmatrix} \begin{pmatrix}
        x \\ y
    \end{pmatrix} \leq
    \begin{pmatrix}
        0 \\ -3
    \end{pmatrix}\\
    & \forall \lambda \in [-2, 2]
\end{split}
\end{align}
\end{enumerate}

\end{document}